\newcommand{\om}{\omega}
\newcommand{\lam}{\lambda}
\newcommand{\al}{\alpha}
\newcommand{\fy}{\varphi}
\newcommand{\p}{\partial}
\newcommand{\I}{\infty}
\newcommand{\wt}[1]{\widetilde {#1}}
\newcommand{\ti}{\wt}
\renewcommand\tilde{\wt}
\newcommand{\R}{\mathbb{R}}
\newcommand{\Z}{\mathbb{Z}}
\renewcommand{\Im}{\mathop{\mathrm{Im}}}
\def\calL{{\mathcal L}}
\def\la{\lambda}
\numberwithin{equation}{section}
\newtheorem{thm}{Theorem}[section]
\newtheorem{cor}[thm]{Corollary}
\newtheorem{lem}[thm]{Lemma}
\newtheorem{prop}[thm]{Proposition}
\theoremstyle{remark}
\newtheorem{defi}{Definition}[section]
\newcommand{\ran}{\rangle}
\newcommand{\lan}{\langle}
\def\less{\lesssim}
\newcommand{\EQ}[1]{\begin{equation}  \begin{split} #1 \end{split} \end{equation} }
\newcommand{\Del}[1]{}
\newcommand{\pr}{\\ &}
\newcommand{\HH}{\mathcal{H}}
\def\pr{\partial}
\def\nn{\nonumber}
\def\calQ{{\mathcal Q}}
\def\eps{\varepsilon}
\def\const{\mathrm{const}}
\def\f{\frac}
\def\fy{\varphi}
\def\calR{\mathcal{R}}
\def\lan{\langle}
\def\ran{\rangle}
\def\const{\mathrm{const}}
\def\wt{\widetilde}
\newcommand{\LL}{\mathcal{L}}
\def\cL{\LL}
\def\cD{\mathcal{D}}
\renewcommand\ln{\log}
\def\ti{\tilde}
\def\cQ{\mathcal{Q}}
\newcommand{\IS}{\mbox{\,{\rm IS}}}
\def\calD{\mathcal{D}}
\begin{document}

\title[Full range of blow up exponents]{Full range of blow up exponents for the quintic wave equation in three dimensions}

\author{Joachim Krieger, Wilhelm Schlag}

\subjclass{35L05, 35B40}

\keywords{critical wave equation, blowup}

\thanks{Support of the National Science Foundation  DMS-0617854, DMS-1160817 for the second author, and  the Swiss National Fund for
the first author  are gratefully acknowledged. The latter would like to thank the University of Chicago for its hospitality in August 2012}

\begin{abstract}
For the critical focusing wave equation $\Box u = u^5$ on $\R^{3+1}$ in the radial case, we prove the existence of type II blow up solutions with scaling parameter $\lambda(t) = t^{-1-\nu}$ for all $\nu>0$. This extends the previous work by the authors and Tataru  where the condition $\nu>\f12$ had been imposed, and gives the optimal range of polynomial blow up rates in light of recent work by Duyckaerts, Kenig and Merle. 
\end{abstract}

\maketitle

\section{Introduction}
We consider the energy critical focussing wave equation 
\begin{equation}\label{eq:foccrit}
\Box u = u^5,\,\Box = \partial_t^2 - \triangle
\end{equation}
on $\R^{3+1}$, in the radial case. This equation has been intensely studied in a number of recent works: the remarkable series of papers \cite{DKM1} - \cite{DKM4} established a complete classification of all {\it{possible}} type II blow up dynamics, without proving their existence. In the works  \cite{KST}, \cite{DoKr} a constructive approach to actually exhibit and thereby prove the existence of such type II dynamics was undertaken. Recall that a type II blow up solution $u(t, x)$ with blow up time $T_*$ is one for which 
\[
\limsup_{t\rightarrow T_{*}} \big( \|u(t, \cdot)\|_{\dot{H}^1} + \|u_t(t, \cdot)\|_{L_x^2} \big) <\infty 
\]
In \cite{DKM4}, it is demonstrated that such solutions can be described as a sum of dynamically re-scaled ground states
\[
\pm W(x) = \pm \Big(1+\frac{|x|^2}{3}\Big)^{-\frac{1}{2}}
\]
plus a radiation term. In particular, for solutions where only one such bulk term is present, one can write the solution as 
\begin{equation}\label{eq:typeII}
u(t, x) = W_{\lambda(t)}(x) + w(t,x) + o_{\dot{H}^1}(1),\,W_{\lambda} = \lambda^{\frac{1}{2}}W(\lambda x),\; w(t,x)\in \dot{H}^1
\end{equation}
where the ``error'' $(w(t,\cdot), \p_{t}w(t,\cdot))$ converges strongly in $\dot H^{1}\times L^{2}$ as $t\rightarrow T_*$, and we have the dynamic condition 
\begin{equation}\label{eq:rate}
\lim_{t\rightarrow T_*}(T_*-t)\lambda(t) = \infty
\end{equation}
In \cite{KST}, it was shown that such solutions with $\lambda(t) = t^{-1-\nu}$ do exist, where $\nu>\frac{1}{2}$ is arbitrary. This left the question whether for {\it{polynomial}} rates the condition \eqref{eq:rate} is indeed optimal. Here we show that it is.

\begin{thm}\label{thm:Main}
Let $\nu>0$ be given. Then there exists an energy class solution $u(t, x)$, which in fact has regularity $H^{1+\frac{\nu}{2}-}$, of the form \eqref{eq:typeII}, with 
\[
\lambda(t) = t^{-1-\nu}
\]
\end{thm}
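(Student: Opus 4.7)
The natural approach is to adapt the scheme of \cite{KST}: build a sufficiently accurate approximate solution $u_{2k-1}$ on a small interval $(0,t_0]$ with the prescribed scaling, then close by a perturbative argument. I would start from $u_0 = W_{\lambda(t)}$ and compute the error $e_0 = \Box u_0 - u_0^5$. Since $-\triangle W = W^5$, this error comes only from $\p_t^2 u_0$ and is controlled pointwise by $|\dot\lambda/\lambda|^2 \sim t^{-2}$ times a bounded profile. One then iterates $u_k = u_{k-1} + v_k$ where $v_k$ is chosen to cancel the leading part of $e_{k-1}$. Following \cite{KST} the correct strategy is to split space into an inner region $r \lesssim \lambda(t)^{-1} = t^{1+\nu}$, where one inverts the radial linearized operator $-\triangle - 5 W^4$ on $\R^3$, and a self-similar region $r \sim t$, where one passes to the variable $a = r/t$ and solves a hyperbolic/transport problem, patched by suitable cutoffs.

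Each correction $v_k$ will then take the form of a finite sum of terms of type $t^{-j\nu}(\log t)^\ell q_{j,\ell}(a)$, with each iteration gaining a factor of order $t^{-\nu}$ in the error. Because this gain is small when $\nu$ is small, one needs $N \gtrsim 1/\nu$ iterations to bring the residual to a prescribed size $t^N$, and this is the first new difficulty compared to \cite{KST}: one must track precisely the growth of the profiles $q_{j,\ell}(a)$ near the light cone $a = 1$, where logarithmic singularities accumulate through repeated inversion of the linearized operator. I would work in weighted function spaces on the self-similar annulus that quantify exactly this logarithmic growth, which seems unavoidable and becomes more intricate for small $\nu$.

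The main obstacle is the perturbative closure. Writing $u = u_{2k-1} + \e$, the perturbation satisfies $\Box \e - 5 W_{\lambda(t)}^4 \e = e_{2k-1} + N(\e)$. A direct energy estimate in $\dot H^1 \times L^2$ would require $\int_0^{t_0}\|e_{2k-1}(t)\|_{L^2}\,dt < \infty$, which is defeated for small $\nu$ by logarithmic and inverse-power factors of $t$ in the source. Instead one should work at fractional regularity $\dot H^{1+s} \times \dot H^s$ with $s$ slightly below $\nu/2$: at that level the source gains a decay factor $\lambda(t)^{-s} = t^{s(1+\nu)}$ that precisely compensates the loss, while the cost $\lambda(t)^{s}$ per slice remains tolerable. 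This is exactly why the theorem asserts regularity $H^{1+\nu/2-}$. I would close the contraction using a parametrix for $\Box - 5 W_{\lambda(t)}^4$ built from the distorted Fourier transform of $-\triangle - 5 W^4$, rescaled by $\lambda(t)$, together with weighted energy estimates inside the backward light cone. The most delicate single step is probably handling the unstable mode and the scaling (zero) mode $\Lambda W$ of the linearization: their contributions must be absorbed into a small modulation of $\lambda(t)$, and one has to verify that the resulting modulation equation is compatible with the prescribed rate $t^{-1-\nu}$ rather than forcing a correction that would destroy it.
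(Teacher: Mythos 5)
Your proposal correctly reproduces the broad architecture shared with \cite{KST}: construct $u_{2k-1}$ by alternating inversions of $L_0$ near $R=0$ and of the conjugated operators $L_\beta$ in the self-similar variable $a=r/t$, note that each pass gains only a factor $t^{-\nu}$ so that more iterations are needed for small $\nu$, work at fractional regularity $H^{1+s}$ with $s<\nu/2$, and close a contraction via the distorted Fourier transform of $-\partial_R^2-5W^4$. However, there is a genuine gap at the crux of the matter: you do not explain how to control the quintic nonlinearity at that low regularity, which is precisely what blocks the $\nu\le\f12$ range. In \cite{KST} the quintic terms are handled by the embedding $(H^{1+2\alpha})^5\subset H^{2\alpha}$, which requires $2\alpha\ge\f14$, while the source $Re_{2k-1}$ forces $2\alpha<\nu/2$; combining these gives $\nu>\f12$. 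Simply proposing to lower $s$ does not remove this obstruction, it runs directly into it. The new ingredient of the paper --- the content of Lemma~\ref{lem:key} --- is that the \emph{first iterate} $\tilde\eps_1$, while only in $H^{1+\nu/2-}$ on the Fourier side, splits as $\tilde\eps^{(1)}+\tilde\eps^{(2)}$ where $\tilde\eps^{(1)}$ is genuinely smoother (in $H^{3/2-}$) and the rough piece $\tilde\eps^{(2)}$ obeys pointwise bounds $\|\tilde\eps^{(2)}/R\|_{L^\infty_{dR}}\lesssim\tau^{-N}$ near the origin, because the source's non-smoothness sits exactly on the light cone $a=1$. This lets one estimate the quintic terms $(\tilde\eps/R)^4\tilde\eps$, $u_{2k-1}^3\tilde\eps^2/R$, etc.\ using $L^\infty$ and $L^M$ bounds on $\tilde\eps/R$ rather than the Sobolev embedding, bypassing the $\alpha\ge\f18$ threshold. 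Without some replacement for this decomposition your perturbative closure does not work for $\nu\le\f12$.

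Secondly, your final remark about absorbing the unstable mode and the scaling mode into a modulation of $\lambda(t)$ is not what this paper (or \cite{KST}) does, and it points in a direction that would in fact jeopardize the prescribed rate. The unique negative eigenvalue is handled by prescribing vanishing data for $x_d$ at $\tau=\infty$, so that the Green's function $H_d(\tau,\sigma)=-\f12|\xi_d|^{-\f12}e^{-|\xi_d|^{1/2}|\tau-\sigma|}$ in \eqref{eq:x_d} is used only in the stable direction; this is a shooting-type choice, not a modulation. The zero-energy resonance $\Lambda W$ is not an eigenvalue and is encoded in the singularity of the spectral measure $\rho(\xi)\simeq\xi^{-1/2}$ at $\xi=0$, again requiring no modulation equation. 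In this construction the discrete mode is in fact one of the easier steps; the delicate analysis is all in the continuous spectral part, and specifically in the first iterate.
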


Our method of proof is closely modeled on the construction from~\cite{KST}, of which we now recall the main steps:
\begin{enumerate}
\item We write $u_0(t,r)=W_{\lambda(t)}(r)$ and  iteratively modify $u_0$ in the form
\[
u_{2k-1} = u_0 + \sum_{j=1}^{2k-1} v_{j}
\]
so that $u_{2k-1}$ satisfies \eqref{eq:foccrit} up to an error of size $t^N$ as $t\to0+$; here $N$ can be made
as large as desired by taking $k$ large, and the size is measured relative to the energy inside a light cone with tip at $r=0, t=0$. 
\item We seek an exact solution via a perturbation:  $u=u_{2k-1}+\eps$. To solve for $\eps$ we switch to coordinates
$R=\la(t)r$, $\tau=\int_t^\infty \la(s)\, ds=\f{1}{\nu}t^{-\nu}$.  The variable $\tau$ varies in the range $\tau_0\le \tau<\infty$. 
\item  In the new coordinates, the driving linear operator is $$\calL = -\p_{RR} - 5W^4(R) \text{\ \ on\ \ } (0,\infty)$$ We perform a spectral analysis
of the operator, which exhibits a unique and simple negative eigenvalue, as well as continuum spectrum; in addition, there is
a zero energy resonance. The latter renders the spectral measure singular at zero energy. 
\item A contraction argument is set up for $\eps$ with a vanishing condition at $\tau=\infty$. For the contraction it is
important that $N$ in the first step is sufficiently large. 
\end{enumerate}

The first three steps in this paper are essentially the same as in~\cite{KST}. 
It is in the final step that we improve on the procedure in~\cite{KST}. In fact, in Proposition~2.8 of that paper the 
condition $\nu>\f12$ arises in order to make the embedding 
\[
(H^{1+2\alpha}(\R^3))^{5}\subset H^{2\alpha}(\R^3)
\]
for $\nu/2>2\alpha\geq \frac{1}{4}$. 
This was used to control the quintic terms in the construction of the {\it{exact solution}} via iteration and application of a suitable parametrix. 
In fact, the difference shall consist in a more detailed analysis of the {\it{first iterate}} for the exact solution, which we exhibit as a sum of two terms, 
one of which is smooth, the other of which satisfies a good $L^\infty$-bound {\it{near the origin}}. 
This latter feature comes from the fact that the loss of smoothness of the approximate solution occurs precisely on the characteristic light cone, 
and thus one expects the exact solution to be smoother near the spatial origin. 

\section{The approximate solution for a power-law rescaling}

\subsection{Generalities}

In radial coordinates, \eqref{eq:foccrit} becomes
\EQ{\label{u5}
\cL_{quintic} u := u_{tt}-u_{rr}-\f2r u_r - u^5 = 0
}
This equation is known to be locally well-posed in the space $\HH:=\dot H^{1}\times L^{2}(\R^{3})$,
meaning that if $(u(0),u_{t}(0))\in\HH$, then there exists a solution locally in time and continuous in time
taking values in~$\HH$.
Solutions need to be interpreted in the Duhamel sense:
\EQ{
u(t) = \cos(t|\nabla|)f + \frac{\sin(t|\nabla|)}{|\nabla|} g + \int_{0}^{t} \frac{\sin((t-s)|\nabla|)}{|\nabla|} u^{5}(s)\, ds
}
These solutions $\cL_{quintic}(u)=0$  have finite energy:
\[
E(u,u_t) = \int_{\R^{3}} \big[\frac12(u_t^2+|\nabla u|^2)-\f{u^6}{6}\big]\, dx =\const
\]
A special stationary solution is $W(r)=(1+r^2/3)^{-\f12}$.  By scaling, $\la^{\f12}W(\la r)$ is also a solution for any $\la>0$.
We are interested in letting $\la$ depend on time. More precisely, we would like to find solutions $\cL_{quintic} u=0$
of the form
\EQ{\label{cusp}
u(t,r)= W_{\la(t)}(r) + w(t,r),\qquad \la(t)\to\infty \text{\ \ as\ \ }t\to0+
}
and $w$ small in a suitable sense. It suffices to show that $w$ remains small in energy, since  this ensures that the solution blows up at time $t=0$ by the mechanism
of ``energy concentration" at the tip of the light-cone $(t,r)=(0,0)$ (think of solving backwards in time).

\subsection{The bulk term}
For the convenience of the reader,  and in order to correct some minor inaccuracies in~\cite{KST} such as the omission of harmless $\log R$ factors, 
 we redo the iterative construction from that paper which constitutes step~(i) from the four step procedure outlined above. 
 We will henceforth fix~$\la(t)=t^{-1-\nu}$ with $\nu>0$. Set
\EQ{
u_0(t,r)=\la(t)^{\f12} W(r\la(t)) = \la(t)^{\f12} W(R)
}
While $u_{0}$ is very far from being an approximate solution, the authors together with D. Tataru showed in~\cite{KST}  that one can add
successive corrections
\EQ{\label{uk def}
u_{k}=u_{0}+v_{1}+v_{2}+ v_{3} +\ldots +v_{k}
}
so that this function approximately solves~\eqref{u5}.  To be specific, they achieved that $\cL_{quintic} u(t)$ goes to zero like $t^{N}$ in the energy norm
restricted to a light cone where $N$ can be made arbitrarily large by taking~$k$ large. This is an iterative construction.
Moreover, from the point of view of the energy, the functions $v_{j}$ are truly lower order, i.e., they will satisfy
\EQ{\label{vj error}
\int_{r\le t} |\nabla_{t,x} v_{j}(t,x)|^{2}\, dx = O(t^{\nu}) \quad t\to0
}
for all $j\ge1$. In contrast, one of course has 
\[
\int_{r\le t} |\nabla_{t,x} u_{0}(t,x)|^{2}\, dx \simeq 1 \quad t\to0
\]
We shall  now focus on the first two steps of the construction, i.e., $u=u_{0}+v_{1}+v_{2}$.  Let us compute the error resulting from~$u_{0}$.
Define $\cD:=\f12+r\p_r=\f12+R\p_R$.  Then
\EQ{
e_0:=\cL_{quintic} u_0 &= \la^{\f12}(t) \Big [ \Big( \f{\la'}{\la}\Big)^2(t) (\cD^2 W)(R) + \Big(\f{\la'}{\la}\Big)^{\prime}(t) (\cD W)(R)  \Big] \\
t^2 e_0 &=: \la^{\f12}(t) \Big [ \om_1  \f{1-R^2/3}{(1+R^2/3)^{\f32}}   + \om_2 \f{ 9-30R^2+R^4}{(1+R^2/3)^{\f52} }  \Big] \label{e0}
}
Here $\om_j$ are nonzero constants depending on~$\nu$ whose values do not concern us.

\subsection{The first correction}\label{sec:v1}
Then $t^2 e_0 = \la(t)^{\f12}O(R^{2}\lan R\ran^{-3})$   as $R\to\I$.
This error   blows up as $t\to0$ like $t^{-2}$. The goal is now to reduce it --- in fact turn it into an error that vanishes as $t\to0$ ---  by adding corrections
to~$u_{0}$, the first one being~$v_{1}$. We will do this by setting   $\lam^2(t) L_0 v_1 = e_0$ where
\EQ{
L_0 &:= \p_R^2 + \f2R\p_R +5 W^4(R)
}
Note that this is the linearized operator obtained by plugging $u_{0}+v_{1}$ into~\eqref{u5} and discarding~$\p_{t}$ altogether.
While this may seem strange, the idea is to look first at the regime $0<r\ll t$ where $\p_{t}$ should matter less than~$\p_{r}$.
We shall see shortly that $v_{1}$ has the good property that it decays like $(t\la(t))^{-2}$, but it produces errors for the nonlinear PDE that grow in~$r$
too strongly. To remove this growth, we carry out a correction
at the second stage with a differential operator near the light cone $r=t$. There the self-similar variable~$a=\f{r}{t}$ becomes important.

Now we discuss~$v_{1}$ in more detail.
A fundamental system of $L_0$ is
\EQ{\label{L0 fund}
\fy_1(R):=\f{1-R^2/3}{(1+R^2/3)^{\f32}},\quad \fy_2(R):=\f{1-2R^2 +R^4/9}{R(1+R^2/3)^{\f32}}
}
The operator
\EQ{\label{tilde L0}
\tilde L_0 = R L_0\, R^{-1} = \p_R^2  +5 W^4(R)
}
has a fundamental system
\EQ{\label{tilde fundsys}
\tilde \fy_1(R) &:=R\fy_1(R)=\f{R(1-R^2/3)}{(1+R^2/3)^{\f32}} =  \tilde\psi_1 (R^{-2})\\
\tilde \fy_2(R) &:= R\fy_2(R) = \f{1-2R^2 +R^4/9}{(1+R^2/3)^{\f32}} = R \tilde\psi_2 (R^{-2})
}
The right-hand sides here are for large~$R$, and the $\tilde\psi_j$ are analytic around~$0$.
The Wronskian is
\EQ{\label{W}
\tilde\fy_1'(R)\tilde \fy_2(R)-\tilde \fy_1(R)\tilde \fy_2'(R) = 1
}
Define $\mu(t):=t\la(t)$, and
\EQ{
\mu^2(t) L_0 v_1 = t^2 e_0,\quad v_1(0)=v_1'(0)=0
}
We claim that
\EQ{
v_1(t,r) =  \mu^{-2}(t) L_0^{-1} t^2  e_0  =  \la^{\f12}(t)  \mu^{-2}(t) O(R) \text{\ \ as\ \ }R\to\I
}
To be more specific,
write
\EQ{
t^2\, e_0 = \la^{\f12}(t)\big( \om_1 \, g_1(R) + \om_2 \, g_2(R) \big)
}
see \eqref{e0}.  Note that the $g_j$ are of the form
\EQ{\label{gj analytic}
g_j(R)= R^{-1} \phi_j(R^{-2}) \quad R\gg 1
}
where $\phi_j$ is analytic around $0$.
Then  $L_0 f_j = g_j$ with $f_j(0)=f_j'(0)=0$ satisfies
\EQ{\label{int}
f_j(R) = R^{-1} \Big( \tilde\fy_1(R) \int_0^R \tilde\fy_2(R')R' g_j(R')\, dR' -  \tilde\fy_2(R) \int_0^R \tilde\fy_1(R')R' g_j(R')\, dR'\Big)
}
for $j=1,2$. The one checks that
\EQ{\label{fasymp}
f_j(R) &= b_{1j} R + b_{2j} + b_{3j}\f{\log R}{R} + O(1/R)\text{\ \ as\ \ }R\to\I \\
f_j(R) &= c_{1j} R^2 +O(R^4) \text{\ \ as\ \ }R\to 0
}
In fact,   around $R=0$ the $f_j(R)$ are even analytic functions, whereas  around $R=\I$ one has the representation
\EQ{\label{fasymp infty}
f_j(R) &= R(b_{1j} + b_{2j} R^{-1}+ R^{-2}\log R\;\fy_{1j}(R^{-2}) +  R^{-2} \fy_{2j}(R^{-1})  ) \\
&=: R(F_j(\rho) + \rho^{2} G_{j}(\rho^2)\log \rho)
}
where $\fy_{1j}, \fy_{2j}$ and $F_j, G_j$ are analytic around zero, with $\rho:=R^{-1}$.  This follows from~\eqref{tilde fundsys}, \eqref{gj analytic}, and~\eqref{int}.
For future reference, we remark that the structure in~\eqref{fasymp infty} is preserved under application of~$\cD$.
In particular,
\EQ{
v_1(t,r) = \la^{\f12}(t)  \mu^{-2}(t)  (\om_1 f_1(R) + \om_2 f_2(R)) =: \la^{\f12}(t) \mu^{-2}(t)  f(R)
}
Define
$$u_1:= u_0+ v_1 = \la^{\f12}(t)\big( W(R) + \mu^{-2}(t) f(R) )$$
In view of~\eqref{fasymp}, and $R\le \mu$ (recall that we are inside of the light cone $r\le t$)
\EQ{\label{u1 R}
u_1(t,r) = \la^{\f12}(t) O(R^{-1}) \quad R\ge1
}
uniformly in $0<t<1$; moreover, we may apply $t\p_t$  
any number of times without affecting this asymptotic property.
Finally, $\la(t)^{-\f12} u_1(t,r)$ is an even analytic function around $R=0$.

\subsection{The  error from $u_1$}
Set $e_1:= \cL_{quintic} (u_1)$.
Then
\EQ{\label{e1def}
e_1 &=  \p_t^2 v_1 - 10 u_0^3 v_1^2 -10 u_0^2 v_1^3 -5 u_0 v_1^4 - v_1^5
}
One has
\EQ{\label{e1}
t^2 \la^{-\f12}(t) e_1 &= \la^{-\f12}(t) ((t\p_t)^2 - t\p_t) \big(  \la^{\f12}(t) w_1(t,r\lam(t)) \big) - \mu^2(t)  (10 W^3(R) w_1^2(t,R) \\
&\qquad + 10 W^2(R) w_1^3(t,R) + 5 W(R) w_1^4(t,R) + w_1^5(t,R))
}
where $w_1(t,R)= \mu^{-2}(t)  f(R)$. Then the nonlinearity in~\eqref{e1} is
\EQ{
& \mu^2(t)(10 W^3(R) w_1^2(t,R)  + 10 W^2(R) w_1^3(t,R) + 5 W(R) w_1^4(t,R) + w_1^5(t,R))  \\
& = \mu^{-2}(t) \big(10 W^3(R)   f^2(R)   + 10 W^2(R) \mu^{-2}(t)   f^3(R) \\
&\quad + 5 W(R)  \mu^{-4}(t)   f^4(R) + \mu^{-6}(t)   f^5(R) \big)
}
whereas
\EQ{
&\la^{-\f12}(t) ((t\p_t)^2 - t\p_t) \big(  \la^{\f12}(t)w_1(t,r\lam(t)) \\&=\left(\left(t\p_t + \frac{t\la'(t)}{\la(t)}\cD\right)^2 - \left(t\p_t + \frac{t\la'(t)}{\la(t)}\cD\right) \right)w_1(t,R) \\
&= \left(\left(t\p_t - (1+\nu) \cD\right)^2 - \left(t\p_t - (1+\nu)\cD\right) \right)w_1(t,R)
}
Now $\mu(t)= t^{-\nu}$ whence
\EQ{
&\mu^2(t) \left(t\p_t + \frac{t\la'(t)}{\la(t)}\cD\right) \mu^{-2}(t)   f(R) = ( 2\nu  -(1+\nu) \cD )   f(R)
}
Note that this is again of the form $ f(R)$ with $f$ as in~\eqref{fasymp}, \eqref{fasymp infty}.
Thus we can write
\EQ{\label{e1*}
t^2 \la^{-\f12}(t) e_1(t,r)
&= \mu^{-2}(t) \Big(   f(R) - \big(10 W^3(R)   f^2(R)  + 10\mu^{-2}(t)  W^2(R)  f^3(R)  \\ &
 \qquad \qquad + 5 \mu^{-4}(t) W(R)    f^4(R) + \mu^{-6}(t)   f^5(R) \big) \Big)
}
We let $a=\frac{r}{t}=\frac{R}{\mu}=Rb$, $b:=\mu^{-1}$ and isolate those terms   in~\eqref{e1*} which do not decay for large~$R$.
Since we are working inside of the light-cone, we have $0\le a\le 1$.
Now, abusing notation somewhat,
\begin{align}
\mu^{-2}(t)  f(R) &= b^2 R (F(\rho)+ \rho^{2} G(\rho^2)\log \rho)  = ba (F(\rho)+ \rho^{2} G(\rho^2)\log \rho)  \nn  \\
\mu^{-2}(t)  W^3(R)  f^2(R)  &=  b^2 R^{-3} \Omega(\rho^2) R^2 (F(\rho)+ \rho^{2} G(\rho^2)\log \rho)^2 \nn  \\
&=  b^2 R^{-1}    (F(\rho)+ \rho^{2} F(\rho)\log \rho + \rho^{4} G(\rho^2)\log^2 \rho ) \label{mess1} \\
\mu^{-4}(t)  W^2(R)  f^3(R)  &=  b^4 R^{-2}\Omega(\rho^2) R^3 (F(\rho)+ \rho^{2} G(\rho^2)\log \rho)^3 \nn \\
&=  b^3 a (F(\rho)+ \rho^{2} F(\rho)\log \rho + \rho^{4} F(\rho)\log^2 \rho + \rho^{6} G(\rho^2)\log^3 \rho)   \nn
\end{align}
where $F,G$ can change from line to line.
Similarly,
\EQ{ \label{mess2}
\mu^{-6}(t)  W(R)  f^4(R)  &= b^6 R^{-1} \Omega(\rho^2)  R^4 (F(\rho)+ \rho^{2} G(\rho^2)\log \rho)^4\\
&= b^3 a^3   (F(\rho)+ \rho^{2} F(\rho)\log \rho  + \rho^{4} F(\rho)\log^2 \rho \\
&\qquad + \rho^{6} F(\rho)\log^3 \rho + \rho^{8} G(\rho^2)\log^4 \rho) \\
\mu^{-8}(t)    f^5(R)  &= b^8 R^5  (F(\rho)+ \rho^{2} G(\rho^2)\log \rho)^5 \\
&= b^3 a^5 (F(\rho)+ \rho^{2} F(\rho)\log \rho  + \rho^{4} F(\rho)\log^2 \rho \\
&\qquad + \rho^{6} F(\rho)\log^3 \rho + \rho^{8} F(\rho)\log^4 \rho + \rho^{10} G(\rho^2)\log^5 \rho)
}
From \eqref{mess1}, \eqref{mess2} we extract the
leading order
\EQ{ \label{e10*}
t^2 \la^{-\f12}(t) e_1^0(t,r)  &:= \mu^{-1}(t) ( c_1 a + c_2 b+ ( c_3 a   + c_4 a^3   + c_5  a^5 )  b^2 )
}
Indeed, from the first line in~\eqref{mess1} we extract $ba(F(0)+\rho F'(0))= ba c_1 +  b^2 c_2$,
whereas from the fifth we extract $b^3 a F(0)$. From the second line in~\eqref{mess2} we retain $b^3 a^3 F(0)$,
and from the fifth one~$b^3 a^5 F(0)$.
The point here is that with this choice of $e_1^0$ one obtains  a decaying error as $R\to\I$
\EQ{ \label{ediff}
&t^2 \la^{-\f12}(t) ( e_1- e_1^0)(t,r) \\
& = \mu^{-2}(t) \Big[ \frac{\log R}{R} \Phi_1(a,b,\rho\log\rho,\rho) + \frac{1}{R} \Phi_2(a,b,\rho\log\rho,\rho)\Big]
}
where $\Phi_j(a,b,u,v)$ are polynomials in $a,b$ and analytic in $u,v$ near $(0,0)$. Writing $b=\frac{a}{R}$ we may
delete the terms involving~$b^2=ba/R$ on the right-hand side of~\eqref{e10*}, since they are of the form~\eqref{ediff}.
Thus, it suffices to consider the simpler leading error
\EQ{ \label{e10}
t^2 \la^{-\f12}(t) e_1^0(t,r)  &:=c_1 a  \mu^{-1}(t) + c_2\mu^{-2}(t) = c_1 ab + c_2 b^2 
}

\subsection{The second correction}
Now we would like to solve the corrector problem ``near $r=t$", i.e.,
\EQ{
t^2 \big(v_{tt}-v_{rr}-\f2r v_r   \big) = -t^2 e_1^0  
}
Note that we have discarded the nonlinearity on the left-hand side since it decays near~$r=t$.
This is designed exactly so as to remove the growth in~$R$.
We seek a solution in the form
\EQ{\label{v2}
v(t,r) = \la(t)^{\f12}\big( \mu^{-1}(t) q_1(a) + \mu^{-2}(t) q_2(a) \big)
}
with boundary conditions $q_1(0)=0, q_1'(0)=0$ and $q_2(0)=0$, $q_2'(0)=0$.
These translate into the boundary conditions $v(t,0)=0$, $\p_r v(t,0)=0$.  This will essentially be the
function~$v_2$.
In view of
\[
\la(t)^{-\f12} \mu^\al \p_t\:  \la(t)^{\f12} \mu^{-\al} =  \p_t + \f{(\al-\f12)\nu-\f12}{t}
\]
we are reduced to the system
\EQ{\label{w1}
&t^2 \Big(-\big(\p_t + \f{\nu-1}{2t}\big)^2 +\p_{rr}+\f2r \p_r  \Big) q_1(a)  =   c_1 a }
and
\EQ{\label{w0}
&t^2 \Big(-\big(\p_t +  \f{3\nu-1}{2t}\big)^2 +\p_{rr}+\f2r \p_r   \Big) q_2(a)  =   c_2
}
Now
\EQ{\nn
& t^2 \Big(-\big(\p_t + \f{\beta}{t}\big)^2 +\p_{rr}+\f2r \p_r  \Big) f(a) \\
& = \big( (1-a^2)\p_a^2 + (2(\beta-1)a+2a^{-1})\p_a - \beta^2 + \beta  \big) f(a)
}
Define
\EQ{\label{Lbg}
L_{\beta}:= (1-a^2)\p_a^2 + (2(\beta-1)a+2a^{-1})\p_a - \beta^2 + \beta
}
The system \eqref{w1}, \eqref{w0} therefore becomes
\EQ{\label{q1q2}
L_{\f{\nu-1}{2}} \; q_1 = c_1 a,\quad L_{\f{3\nu-1}{2}}\; q_2 = c_2
}
with boundary conditions $q_1(0)=0, q_1'(0)=0$ and $q_2(0)=0$, $q_2'(0)=0$.

Since for any integer $n\ge2$
\[
L_\beta\, a^n = n(n+1) a^{n-2}  + (2n(\beta-1) - n(n-1) +\beta -\beta^2) a^n
\]
we see that \eqref{q1q2} has power series solutions
\EQ{\label{power}
q_1(a)=\sum_{j=1}^\I c_{1j} \, a^{2j+1},\qquad q_2(a)=\sum_{j=1}^\I c_{2j} \, a^{2j}
}
convergent in $|a|<1$ and unique. In fact, $c_{21}=\f{c_2}{6}$ and $c_{11}=\f{c_1}{12}$.

Next, we determine the behavior of these functions as $a\to1-$. We remark that in our case, always $\beta>-\f12$.  There exists a fundamental system of $L_\beta$ of the form
\EQ{
\psi_1(a) &=1 +\sum_{\ell=1}^\I d_\ell\, (1-a)^\ell, \\
 \psi_2(a) &= (1-a)^{\beta+1}\big[1 +\sum_{\ell=1}^\I \tilde d_\ell\, (1-a)^\ell \big] = (1-a)^{\beta+1} \psi_2^0(a)
}
provided $\beta\not\in \Z_0^+$, see~\cite[Lemma 3.6]{KST}. In that case these series define entire functions.  On the other hand, if $\beta\in \Z_0^+$,
then $\psi_1$ is modified to
\[
\psi_1(a) =1 +\sum_{\ell=1}^\I d_\ell \, (1-a)^\ell+ c\,\psi_2(a)\log(1-a)
\]
with a unique choice of~$c$.    Notice that in this case the singularity of~$\psi_1$ near $a=1$
is no worse than~$(1-a)^{\beta+1}\log(1-a)$.

In either case the Wronskian
\[
W = \psi_1' \, \psi_2 - \psi_1\,\psi_2'
\]
satisfies
\[
W'(a)=- \f{2(\beta-1)a+2a^{-1}}{1-a^2} W(a)
\]
whence
\[
W(a)=k (1-a^2)^\beta a^{-2}, \quad k\ne0
\]
We define the Green function
\[
G_\beta(a,a'):= \psi_1(a)\psi_2(a')-\psi_1(a')\psi_2(a)
\]
Then a particular solution of $L_\beta\, q=f$ is given by
\EQ{\label{qint}
q(a) &= \int_{0}^a G_\beta(a,\ti a) W(\ti a)^{-1} (1-\ti a^2)^{-1}   f(\ti a)\, d \ti a \\
&= k^{-1} \int_{0}^a G_\beta(a,\ti a)  (1-\ti a^2)^{-(\beta+1)}\, \ti a^2  f(\ti a)\, d \ti a \\
& = k^{-1} \psi_1(a)  \int_{0}^a \psi_2^0(\ti a)\, (1+\ti a)^{-\beta-1}\ti a^2  f(\ti a)\, d \ti a \\&\qquad - k^{-1} \psi_2(a)  \int_{0}^a \psi_1(\ti a)(1-\ti a^2)^{-(\beta+1)}\, \ti a^2  f(\ti a)\, d \ti a
}
Returning to~\eqref{q1q2} we need to set $f(a)=1$ and $f(a)=a$ here.    Note that $q(a)$ as given by~\eqref{qint} satisfies
the boundary conditions $q(0)=q'(0)=0$, whence it agrees with $q_2$ and~$q_1$, respectively, as given by~\eqref{power}.

Let us first assume that $\beta\not\in \Z_0^+$. Then the term on the third line of~\eqref{qint} is analytic near~$a=1$.
To analyze the behavior of the expression on the fourth line as $a\to 1$, we note that up to an analytic factor near $a=1$ it
equals
\EQ{\label{2ndint}
(1-a)^{\beta+1} \int_0^a (1-\ti a)^{-\beta-1}\, h(\ti a) \, d\ti a
}
where $h$ is analytic in a neighborhood of $[0,1]$.  By inspection, \eqref{2ndint} is of the form
\[
(1-a)^{\beta+1}\big( (1-a)^{-\beta} H_1(a) + c) H_2(a)
\]
where $c$ is a constant and $H_1, H_2$ are analytic near $[0,1]$. It  follows that $\beta\not\in \Z_0^+$, which
means that $\nu$ is neither an odd positive integer nor of the form $\f{2n+1}{3}$ with $n\in \Z_0^+$, one has that
 $q_1(a), q_2(a)$ are analytic in the disk $|a|<1$ and near $a=1$ they are of the form
 \EQ{\label{Q}
 Q_1(a) + (1-a)^{\beta+1} Q_2(a)
 }
where $Q_j$ are analytic near $a=1$.  These functions have the property that after applying~$\p_a$ they remain in~$L^2(0,1)$ due to $\beta>-\f12$.
 Evidently, they also become smoother as $\beta$ increases, but they are never infinitely smooth (since $\beta$ is not an integer).

On the other hand, if $\beta\in \Z_0^+$, then the representation~\eqref{Q} needs to be modified with logarithmic factors $\log(1-a)$.
We leave these details to the reader.

Using  $a=R\mu^{-1}$ we may rewrite~\eqref{v2} in the form
\EQ{\label{v2 1}
v_2^0(r,t):= \frac{\la(t)^{\f12}}{\mu^2(t)} \big( R \tilde q_1(a)+ q_2(a))
}
where we have set $\tilde q_1(a):= a^{-1} q_1(a)$. This ensures that both $\ti q_1$ and $q_2$ have even
power series in~$a$ around $a=0$. Also note that these functions are~$O(a^2)$ as $a\to0$.   We make one more adjustment:  in~\eqref{v2 1} one has an odd expansion
around $R=0$, namely just the linear term~$R$.  We prefer to modify~\eqref{v2 1} as follows so as to retain the
even expansion at~$R=0$:
\EQ{\label{v2 2}
v_2(r,t):= \frac{\la(t)^{\f12}}{\mu^2(t)} \big( R^2 \lan R\ran^{-1} \tilde q_1(a)+ q_2(a))
}
Note that for large~$R$ this captures the $R$ growth of~\eqref{v2 1}, and the next order correction is $R^{-1}$.
 With this definition of~$v_2$ we set  $u_2:= u_1+v_2=u_0+v_1+v_2$. By construction, $v_2(t,r)$ is analytic in~$R$
 around $R=0$ with an even expansion.  Finally, \eqref{u1 R} remains valid for~$u_2$ as well. In other words, $u_0$
 gives the main shape of the profile as a  function of~$R$.

 \subsection{The error from $u_2$}
 We define
 \EQ{
 e_2 &:= \cL_{quintic} (u_2) = \cL_{quintic} (u_1+ v_2) \\
 &=  \cL_{quintic} (u_1)  + u_1^5 - (u_1+v_2)^5 +(  \p_{tt} -\p_{rr} - \f{2}{r}\p_r )v_2\\
 &= e_1 - e_1^0 - 5u_1^4 v_2 - 10 u_1^3 v_2^2 - 10 u_1^2 v_2^3 - 5 u_1 v_2^4 - v_2^5 \\
 & \qquad +  (  \p_{tt} -\p_{rr} - \f{2}{r}\p_r )(v_2-v_2^0)
 }
 We determine $t^2 \la(t)^{-\f12} e_2$. First, from~\eqref{ediff}
 \EQ{ \label{ediff 2}
&t^2 \la^{-\f12}(t) ( e_1- e_1^0)(t,r) \\
& = \mu^{-2}(t) \Big[ \frac{\log R}{R} \Phi_1(a,b,\rho\log\rho,\rho) + \frac{1}{R} \Phi_2(a,b,\rho\log\rho,\rho)\Big]
}
for $R\ge1$. For $|R|<1$ we read off from~\eqref{e1*} and~\eqref{e10} that
 \EQ{ \label{ediff 3}
&t^2 \la^{-\f12}(t) ( e_1- e_1^0)(t,r)  = O(\mu^{-2}(t))
}
This holds uniformly for small times, and $t\p_t$ can be applied any number of times without changing this asymptotic behavior as $R\to0$.

Next, for large~$R$
\[
t^2 \la^{-\f12}(t) u_1^4 v_2 = O(R^{-3} a^2) = O(R^{-1} \mu(t)^{-2})
\]
which is of the form~\eqref{ediff 2}.
The final nonlinear term contributes
\[
t^2 \la^{-\f12}(t)  v_2^5 = \mu^{-8} O(R^5) = \mu^{-2} R^{-1} O(\mu^{-6} R^6) = O(R^{-1} \mu(t)^{-2})
\]
We leave it to the reader to verify that the other nonlinear terms behave in the same fashion.  For small~$R$,
the nonlinear terms are $O(\mu^{-2}(t) R^2)$.

 The difference $v_2-v_2^0$ contributes this: for $R>1$,
 \EQ{
& \la(t)^{-\f12} t^2 (  \p_{tt} -\p_{rr} - \f{2}{r}\p_r ) \f{\la(t)^{\f12}}{\mu(t)^2} \big( (R-R^2\lan R\ran^{-1}) \ti q_1(a)\big) = O(R^{-1} \mu(t)^{-2})
 }
 Here we used that $\ti q_1(a) = O(a^2)$ for small $a$. For small $R$ this term is~$O(\mu^{-2}(t) R)$.
By inspection, $e_2 = \cL_{quintic} (u_2)$ has an even analytic expansion around $R=0$, and by the preceding we gain a factor $\mu^{-2}$
for all~$R$, and the decay is at least $\frac{\log R}{R} $ as~$R\to\I$.

  \subsection{Iterating the construction: the next  corrections $v_3$ and $v_4$}

 We now return to Section~\ref{sec:v1} in which we constructed~$v_1$ from~$e_0$. Here we need to determine~$v_3$ from~$e_2$
 via the same route, i.e., by solving $\la(t)^2 L_0 v_3 = e_2$ with zero Cauchy data at~$R=0$.
The only essential difference between $e_0$ and $e_2$ is that the latter looses a $\log R$ in terms of the $R^{-1}$ decay.
The dependence on~$a,b$ makes no difference in terms of the asymptotic behavior or the construction, however. Therefore, the same
construction as before yields
\[
v_3(t,r) = \frac{\la(t)^{\f12}}{\mu(t)^4} O(R\log R)
\]
 as $R\to\I$. Moreover, the asymptotic expansion involves terms with both $\log R$ and $\log^2 R$.
 Around $R=0$ we have an even Taylor expansion starting off with~$R^2$.   The
 \[
 u_3:= u_0 + v_1 + v_2 + v_3
 \]
  still obeys the decay law \eqref{u1 R}; in fact, any finite number of powers of $\log R$ can be lost since they are more than
  made up for by the gain of an extra power of~$\mu^{-2}$ in~$v_3$. In this sense, $v_3$ is of strictly smaller order than both $v_1$ and~$v_2$
 which can be comparable to~$u_0$ on the light-cone $r=t$.
  For $v_4$, one repeats the same construction that lead to~$v_2$ above designed to obtain a decaying error.   The error will be on
  the order of~$\mu^{-4}$.

 The process can then be repeated any number of times. In~\cite{KST} the authors, together with D. Tataru, formalized this using various function algebras
 designed to hold the $v_j$ the associated errors $e_j$. The details are as follows. 
 We have 
  \[
  u_k = v_{k} + u_{k-1}
  \]
  The error at step $k$ is
  \[
  e_k = (-\partial_t^2 + \partial_r^2 +\frac{2}r \partial_r) u_k
  + u_k^5
  \]
  we
  define $v_k$ by
  \begin{equation}
    \left(\partial_r^2 +\frac{2}r \partial_r + 5u_0^4\right) v_{2k+1} +
    e_{2k}^0=0
    \label{vkodd}\end{equation} respectively
  \begin{equation}
    \left(-\partial_t^2+\partial_r^2 +\frac{2}r \partial_r \right) v_{2k} +
    e_{2k-1}^0=0
    \label{vkeven}\end{equation} both equations having zero Cauchy
  data  at $r=0$.  Here at each
  stage the error term $e_k$ is split into a principal part and a
  higher order term (to be made precise below),
  \[
  e_k = e_k^0 + e_k^1
  \]
  The successive errors are then computed as
  \[
  e_{2k} = e_{2k-1}^1 + N_{2k} (v_{2k}), \qquad e_{2k+1} = e_{2k}^1 -
  \partial_t^2 v_{2k+1} + N_{2k+1} (v_{2k+1})
  \]
  where
  \begin{equation}
    N_{2k+1}(v) =5(u_{2k}^4-u_0^4) \,v +
  10 u_{2k}^3 \,v^2 + 10 u_{2k}^2 \,v^3 + 5u_{2k} \,v^4 + v^5
    \label{eodd}\end{equation}
    respectively
  \begin{equation}
    N_{2k}(v) = 5 u_{2k-1}^4v +
  10 u_{2k-1}^3 v^2 + 10 u_{2k-1}^2 v^3 + 5u_{2k-1} v^4 + v^5
    \label{eeven}\end{equation}
The function spaces are as follows. First, the one relevant to the $L_{0}$ iteration.  

 \begin{defi}\label{def:Smkell}
  $S^m(R^k\,(\log R)^{\ell} )$ is the class of smooth functions
  $v:[0,\infty) \to \R$ with the following properties:

  (i) $v$ vanishes of order $m$  and $R^{-m} v$ has an even
  Taylor expansion at $R=0$. 

  (ii) $v$ has a convergent expansion near $R=\infty$ of the form
  \[
  v(R) = \sum_{i=0}^\infty \sum_{j=0}^{\ell+i} c_{ij}\, R^{k-i} (\log R)^{j} 
  \]
\end{defi}

The importance of even expansions in $R$ near zero lies with the fact that
only those correspond to smooth functions in~$\R^3$. For the same
reason, we will work with even~$m$. 
Second, we introduce the space arising from the Sturm-Liouville problem near $a=1$. 

\begin{defi}\label{def:Q}     
  We define $\mathcal Q$ to be the algebra of continuous functions $q:[0,1] \to
  \R$ with the following properties:

  (i) $q$ is analytic in $[0,1)$ with an even expansion at $0$ and with $q(0)=0$.

  (ii) Near $a=1$ we have an  expansion of the
  form
  \[
\begin{split}
  q(a) = q_0(a) + \sum_{i =1}^\infty  (1-a)^{ \beta(i) +1}\sum_{ j=0}^{\infty}
    q_{ij}(a) (\ln (1-a))^j
  \end{split}
  \]
  with analytic coefficients $q_0$, $q_{ij}$; if $\nu$
  is irrational, then $q_{ij}=0$ if~$j>0$.  The $\beta(i)$ are of the form
  \EQ{\label{menge}
  \sum_{k\in K} \big( (2k-3/2)\nu-1/2 \big) + \sum_{k\in K'} \big( (2k-1/2)\nu-1/2 \big)  
  }
  where $K,K'$ are finite sets of positive integers. 
  Moreover, only finitely many of the $q_{ij}$ are nonzero. 
\end{defi}

We remark that the exponents of $1-a$ in the above series all exceed~$\f12$ 
because of $\nu>0$.  For the errors $e_k$ we
introduce

\begin{defi}
   $\mathcal Q'$ is the space of continuous functions $q:[0,1) \to
  \R$ with the following properties:

  (i) $q$ is analytic in $[0,1)$ with an even expansion at $0$. 

  (ii) Near $a=1$ we have an expansion of the form
  \[\begin{split}
  q(a) =  q_0(a) + \sum_{i =1}^\infty  (1-a)^{ \beta(i)}\sum_{ j=0}^{\I}
    q_{ij}(a) (\ln (1-a))^j
  \end{split}
  \]
  with analytic coefficients $q_0$, $q_{ij}$, of which only finitely many are nonzero. The $\beta(i)$ are as above. 
\label{def:Qp}\end{defi}

By construction, $\mathcal Q\subset \mathcal Q'$. The family
$\cQ'$ is obtained by applying $a^{-1}\partial_a$ to
the algebra~$\cQ$. The exact number of $\log(1-a)$ factors can of course
be determined, but is irrelevant for our purposes.

\begin{defi}
  a) $S^m(R^k (\log R)^{\ell} ,\mathcal Q)$ is the class of analytic
  functions $v:[0,\infty) \times [0,1]\times [0,b_0] \to \R$ so that

  (i) $v$ is analytic as a function of $R,b$, and 
  \[
  v: [0,\infty) \times [0,b_0] \to \mathcal Q
  \]

  (ii) $v$ vanishes of order $m$ relative to~$R$,  and $R^{-m} v$ has an even
  Taylor expansion at $R=0$.

  (iii) $v$ has a convergent expansion at $R=\infty$,
  \[
  v(R,a,b) = \sum_{i=0}^\infty \sum_{j=0}^{\ell+i} c_{ij}(a,b) \, R^{k-i} (\log R)^{j}
  \]
  where the coefficients $c_{ij}(\cdot,b) \in \cQ$, and $c_{ij}(a,b)$  are analytic in~$b\in [0,b_{0}]$ for
  all~$0\le a\le1$. 

  b) $\IS^m(R^k (\log R)^{\ell},\mathcal Q)$ is the class of analytic
  functions $w$ on the cone $\mathcal C_0$ which can be represented as
  \[
  w(r,t) = v(R,a,b), \qquad v \in S^m(R^k (\log R)^{\ell} ,\mathcal Q)
  \]
  
  The same holds with $\cQ'$ in place of $\cQ$. 
\end{defi}

As in the first two steps we shall exploit that  $R,a,b$ are dependent variables, 
by switching from one representation
to another as needed. We shall prove by induction that the
successive corrections $v_k$ and the corresponding error terms $e_k$
can be chosen with the following properties. There exist  increasing sequences $m_{k}, p_{k}, q_{k}$
of nonegative integers  with $m_{1}=p_{1}=0, q_{1}=1$ so that for each $k\ge1$,   

\begin{align}
  v_{2k-1} &\in \frac{\lambda^{\frac12}}{\mu(t)^{2k}} \IS^2(R \, (\log R)^{m_{k}},\cQ)
  \label{v2k-1}\\
  t^2 e_{2k-1} &\in \frac{\lambda^{\frac12}}{\mu(t)^{2k}} \IS^0(R\, (\log R)^{p_{k}}, \cQ')
  \label{e2k-1}\\
  v_{2k} &\in \frac{\lambda^{\frac12}}{\mu(t)^{2k+2}} \IS^2(R^3\, (\log R)^{p_{k}},\cQ) \label{v2k}\\
  t^2 e_{2k} &\in \frac{\lambda^{\frac12}}{\mu(t)^{2k}} \big[\IS^0(R^{-1} \, (\log R)^{q_{k}} ,\cQ)
   + b^2\IS^0(R \, (\log R)^{q_{k}} ,\cQ')    \big]
  \label{e2k}\end{align}

One can of course easily determine the optimal choice of $m_{k}, p_{k},q_{k}$ from the algorithm outlined
below, but this is of no significance. 
We now inductively verify these claims. 

\medskip

{\bf Step 0:} {\em The analysis at $k=0$}

\smallskip
\noindent We observed in~\eqref{e0} that 
\begin{equation}\label{e20}
t^2 e_0 \in
\lambda^{\frac12} IS^0(R^{-1})
\end{equation}
as claimed. Now assume we know the above relations hold up to $k-1$
with $k\ge1$, and we show how to construct $v_{2k-1}$, respectively
$v_{2k}$, so that they hold for the index~$k$.

\medskip

{\bf Step 1:} {\em Begin with $e_{2k-2}$ satisfying \eqref{e2k}
  or~\eqref{e20} and choose $v_{2k-1}$ so that \eqref{v2k-1} holds.}

\smallskip \noindent If $k=1$, then define $e_0^0:= e_0$. If $k>1$,
we use \eqref{e2k} to write
\[
  e_{2k-2} = e_{2k-2}^0 + e_{2k-2}^1
\]
where
\EQ{
& t^2 e_{2k-2}^0 \in \frac{\lambda^{\frac12}}{\mu(t)^{2k-2}}
\IS^0(R^{-1} (\log R)^{q_{k-1}} ,\cQ), \\
& t^2 e_{2k-2}^1 \in
\frac{\lambda^{\frac12}}{\mu(t)^{2k}} \IS^0(R (\log R)^{q_{k-1}} ,\cQ')
}
We note that the term $e_{2k-2}^1$ can be included in $e_{2k-1}$,
cf.~\eqref{e2k-1}. We define $v_{2k-1}$ as in \eqref{vkodd}
neglecting the $a$ dependence of $e_{2k-2}^0$. In other words, we choose to treat~$a$
as a parameter and the error resulting from this choice will then be incorporated
in~$e_{2k-1}$. 

  Changing variables to $R$ in
\eqref{vkodd} we need to solve the equation
\[
\mu(t)^2 L_{0} v_{2k-1} = - t^2 e_{2k-2}^0 \in
\frac{\lambda^{\frac12}}{\mu(t)^{2k-2}} \IS^0(R^{-1} (\log R)^{q_{k-1}}, \cQ)
\]
where the operator $L_{0}$ is the one from above. 
Then \eqref{v2k-1} is a consequence of the following ODE lemma.
We remark that the statement of Lemma~\ref{lem:Lsolve} is not optimal
with respect to the number of logarithms in~$R$. But for the sake of
simplicity we choose this formulation. 

\begin{lem} \label{lem:Lsolve}
The solution $v$ to the equation
  \[
  L_{0} v= f \in S^0(R^{-1}(\log R)^\ell ), \qquad v(0) = v'(0)=0
  \] with integer $\ell\ge0$ 
  has the regularity
  \EQ{\label{v hold}
  v \in S^2(R\,(\log R)^\ell )  
  }
\end{lem}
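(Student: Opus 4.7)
The plan is to write the solution explicitly using variation of parameters via the fundamental system $\tilde\varphi_1,\tilde\varphi_2$ of the conjugated operator $\tilde L_0 = R L_0 R^{-1}$ from \eqref{tilde fundsys}--\eqref{W}, exactly as in formula~\eqref{int}:
\[
Rv(R) \;=\; \tilde\varphi_1(R) \int_0^R \tilde\varphi_2(R')\, R' f(R')\, dR' \;-\; \tilde\varphi_2(R) \int_0^R \tilde\varphi_1(R')\, R' f(R')\, dR'.
\]
Starting both integrals at $R = 0$ enforces the requisite zero Cauchy data, and uniqueness holds because any other solution would differ by a linear combination of $\varphi_1 \sim 1$ and $\varphi_2 \sim R^{-1}$ at $R = 0$, neither of which is compatible with $v(0) = v'(0) = 0$ (the second being infinite at $0$).

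For the local behavior at $R = 0$, I would use that $\tilde\varphi_1(R) = R + O(R^3)$ is odd and $\tilde\varphi_2(R) = 1 + O(R^2)$ is even in $R$, while $f$ has an even Taylor expansion by the $S^0$ hypothesis. A parity count then shows that $Rv$ is an odd analytic function of $R$, and direct substitution of the Taylor series gives $Rv(R) = O(R^3)$, so $v$ vanishes of order $2$ at $R = 0$ with an even expansion, matching the $S^2$ condition of Definition~\ref{def:Smkell}.

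For the behavior at $R = \infty$, I would substitute the convergent expansions $\tilde\varphi_1(R) = \tilde\psi_1(R^{-2})$, $\tilde\varphi_2(R) = R\tilde\psi_2(R^{-2})$ together with the expansion $f(R) = \sum_{i \ge 0}\sum_{0 \le j \le \ell + i} a_{ij} R^{-1-i}(\log R)^j$ into both integrands. Each integrand becomes a double series in monomials $R'^{k}(\log R')^{j}$, whose primitives are $R^{k+1}$ times a polynomial of degree $j$ in $\log R$ when $k \neq -1$, and $(\log R)^{j+1}/(j+1)$ in the resonant case $k = -1$. Multiplying by $\tilde\varphi_\bullet(R)$ and by $R^{-1}$ and recollecting produces a convergent expansion of the form $\sum_{i \ge 0}\sum_{0 \le j \le \ell + i} c_{ij} R^{1-i}(\log R)^j$ required by $S^2(R(\log R)^\ell)$.

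The main obstacle is the logarithmic bookkeeping: each resonant integration increases the log degree by one, and one has to verify that this increment is exactly compensated by a simultaneous unit decrease in the effective power of $R$ in the final expression, so that the bound $j \le \ell + i$ is preserved throughout the expansion. The leading term $R(\log R)^\ell$ in particular arises from the non-resonant pairings of the $R'$-leading part of $\tilde\varphi_2$ and the constant-leading part of $\tilde\varphi_1$ with the $R^{-1}(\log R)^\ell$ tail of $f$, and therefore retains exactly $\ell$ logs as required.
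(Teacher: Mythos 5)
Your proposal is correct and follows essentially the same route as the paper: write $Rv$ explicitly via variation of parameters using the fundamental system \eqref{tilde fundsys}--\eqref{int}, verify the even vanishing to order $2$ at $R=0$ by a parity count, and obtain the expansion at $R=\infty$ by substituting the series for $\tilde\psi_1,\tilde\psi_2$ and for $f$, integrating term by term, and splitting into resonant ($y^{-1}(\log y)^j$) and non-resonant monomials; the paper's proof of Lemma~\ref{lem:Lsolve} does precisely this, absorbing the lower-limit constants into homogeneous solutions. One small caveat: your heuristic that the log-degree increment from a resonant integration is ``exactly compensated by a simultaneous unit decrease in the effective power of $R$'' does not in fact close the count in the borderline case coming from the leading coefficient of $\tilde\psi_2$ paired with the $i=2$, $j=\ell+2$ tail of $f$ (one ends up with $j'=j+1$ but $i'$ unchanged); this is precisely why the paper remarks just before the lemma that its statement ``is not optimal with respect to the number of logarithms'' and is chosen for simplicity --- the slack is harmless in the bootstrap since the exponents $m_k,p_k,q_k$ are allowed to grow with $k$.
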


\begin{proof}
  This follows form the representation~\eqref{tilde fundsys}, \eqref{gj analytic}, and~\eqref{int}. Indeed, 
  \EQ{\label{scheiss1}
v(R) &= R^{-1} \Big( \tilde\fy_1(R) \int_0^R \tilde\fy_2(R')R' f(R')\, dR' -  \tilde\fy_2(R) \int_0^R \tilde\fy_1(R')R' f(R')\, dR'\Big) \\
&= R^{-1} \tilde\psi_1 (R^{-2}) \int_{R_{0}}^{R} y \tilde\psi_2 (y^{-2})  \sum_{i=0}^\infty \sum_{j=0}^{\ell+i} c_{ij}\, y^{-i} (\log y)^{j} \, dy \\
&\qquad - \tilde\psi_2(R^{-2}) \int_{R_{0}}^R \tilde\psi_1(y^{-2})  \sum_{i=0}^\infty \sum_{j=0}^{\ell+i} c_{ij}\, y^{-i} (\log y)^{j} \, dy  \\
&\qquad + c_{1}\phi_{1}(R)+c_{2}\phi_{2}(R) 
}
 The first integral in \eqref{scheiss1}  contributes
 \EQ{\nn
 & R^{-1} \tilde\psi_1 (R^{-2})\int_{R_{0}}^{R}  \sum_{n=0}^{\I} a_{n}\, y^{-2n}  \sum_{i=0}^\infty \sum_{j=0}^{\ell+i} c_{ij}\, y^{1-i} (\log y)^{j} \, dy \\
 & = \const\,\phi_{1}(R)+ \tilde\psi_1 (R^{-2})\sum_{n=0}^{\I} \sum_{i=0}^\infty \sum_{j=0}^{\ell+i} \chi_{[1-2n-i\ne-1]}a_{nij}\,  R^{1-i-2n} (\log R)^{j} \\
 &\quad + R^{-1} \tilde\psi_1 (R^{-2}) \sum_{n=0}^{\I} \sum_{i=0}^\infty \sum_{j=0}^{\ell+i} \chi_{[1-2n-i=-1]}a_{nij}\,   (\log R)^{j+1} 
 }
  which lies in $S^{2}(R \,(\log R)^\ell )$, whereas the second one contributes
   \EQ{\nn 
 &  \tilde\psi_2 (R^{-2})\int_{R_{0}}^{R}  \sum_{n=0}^{\I} b_{n}\, y^{-2n}  \sum_{i=0}^\infty \sum_{j=0}^{\ell+i} c_{ij}\, y^{-i} (\log y)^{j} \, dy \\
 & = \const\,\phi_{2}(R)+ \tilde\psi_2 (R^{-2}) \sum_{n=0}^{\I} \sum_{i=0}^\infty \sum_{j=0}^{\ell+i} \chi_{[-2n-i\ne-1]}b_{nij}\,  R^{1-i-2n} (\log R)^{j} \\
 &\quad +  \tilde\psi_2 (R^{-2}) \sum_{n=0}^{\I} \sum_{i=0}^\infty \sum_{j=0}^{\ell+i} \chi_{[-2n-i=-1]}b_{nij}\,   (\log R)^{j+1} 
 }
  which again lies in $S^{2}(R \,(\log R)^\ell )$. 
    \end{proof}

We remark that $v_{1}$ as constructed above satisfies
  \[
  v_{1} \in  \frac{\lambda^{\frac12}}{\mu(t)^{2}} S^{2}(R), 
  \]
see \eqref{fasymp}.

\medskip

{\bf Step 2:} {\em Show that if $ v_{2k-1}$ is chosen as above then
  \eqref{e2k-1} holds.}

\smallskip \noindent Thinking of $v_{2k-1}$ as a function of $t$, $R$
and $a$ we can write $e_{2k-1}$ in the form
\[
e_{2k-1} = N_{2k-1}(v_{2k-1}) + E^t v_{2k-1} + E^a v_{2k-1}
\]
Here $N_{2k-1}(v_{2k-1})$ accounts for the contribution from the
nonlinearity and is given by \eqref{eodd}. $E^t v_{2k-1}$ contains
the terms in
\begin{equation}\label{eq:prtt}
  -\partial_{tt} v_{2k-1} (t,R,a)
\end{equation}
where no derivative applies to the variable $a$, while $ E^a
v_{2k-1}$ contains those terms in
\[
\Big(\partial_{tt}-\pr_{rr} -\frac{2}{r}\pr_r\Big) v_{2k-1}(t,R,a)
\]
where at least one derivative applies to the variable~$a$ (recall
that in Step~1 the parameter $a$ was frozen). We begin with the
terms in $N_{2k-1}$. We first note that, by summing the $v_j$ over
$1\le j\le 2k-2$,
\begin{equation}\label{eq:uentw1}
  u_{2k-2} - u_0 \in \frac{\lambda^{\frac12}}{\mu^{2}} \IS^2(R \, (\log R)^{n},\mathcal
  Q)\end{equation}
  for some integer $n=n(k)\ge0$. 
The first term in $N_{2k-1}(v_{2k-1})$ contributes
\begin{eqnarray}
t^2 (u_{2k-2}^4-u_0^4)v_{2k-1}&=t^2 [(u_{2k-2}-u_0)^4 +
4(u_{2k-2}-u_0)^3u_0 \nn\\
&\quad+ 6(u_{2k-2}-u_0)^2 u_0^2 + 4 (u_{2k-2}-u_0) u_0^3]
v_{2k-1}\label{eq:udiff1}
\end{eqnarray}
Using \eqref{eq:uentw1} we compute
\begin{align*}
& t^2 (u_{2k-2}-u_0)^4 v_{2k-1} \\&\in \frac{1}{(t\lambda)^6}
\IS^8(R^4  \, (\log R)^{4n},\cQ) \frac{\lambda^{\frac12}}{(t \lambda)^{2k}}
\IS^2(R\, (\log R)^{m_{k}},\cQ) \\
&\subset a^6 \IS^2(R^{-2}\, (\log R)^{4n},\cQ) \frac{\lambda^{\frac12}}{\mu^{2k}} \IS^2(R\, (\log R)^{m_{k}},\cQ)\\
& \subset \frac{\lambda^{\frac12}}{\mu^{2k}}
\IS^2(R^{-1}\, (\log R)^{p_{k}},\cQ)
\end{align*}
as well as
\begin{align*}
& t^2 (u_{2k-2}-u_0)u_0^3 v_{2k-1} \\ &\in t^2
\frac{\lambda^{\frac12}}{(t \lambda)^{2}} \IS^2(R\, (\log R)^{n},\mathcal
  Q) \lambda^{\frac32} S^0(R^{-3})
 \frac{\lambda^{\frac12}}{(t \lambda)^{2k}}
\IS^2(R\, (\log R)^{m_{k}},\cQ)\\
&\subset \frac{\lambda^{\frac12}}{(t \lambda)^{2k}}
\IS^2(R^{-1}\, (\log R)^{p_{k}},\cQ)
\end{align*}
The other two terms in \eqref{eq:udiff1} are similar. Next, compute
\begin{align*}
  t^2 v_{2k-1}^5 &\in \frac{t^2 \lambda^{\frac52}}{(t
\lambda)^{10k}} \IS^{10}(R^5\, (\log R)^{5m_{k}},\cQ ) \\
&\subset \frac{\lambda^{\frac12}R^6}{(t \lambda)^{10k-2}}
\IS^4(R^{-1}\, (\log R)^{5m_{k}},\cQ )\\
&\subset \frac{\lambda^{\frac12}}{(t \lambda)^{2k}} {a^6} b^{8(k-1)}
\IS^2(R^{-1}\, (\log R)^{5m_{k}},\cQ ) \\&\subset \frac{\lambda^{\frac12}}{(t
\lambda)^{2k}}
 \IS^2(R^{-1}\, (\log R)^{p_{k}},\cQ )
\end{align*}
and
\begin{align*}
  & t^2 u_{2k-2}^3\,v_{2k-1}^2 \\ &\in
\lambda^{-\frac12} (t\lambda)^2 \IS^0(R^{-3}, \cQ )
\frac{\lambda}{(t\lambda)^{4k}} \IS^4(R^2\, (\log R)^{2m_{k}},\cQ ) \\
&\subset \frac{\lambda^{\frac12}}{(t\lambda)^{2k}} b^{2k-2}
\IS^4(R^{-1}\, (\log R)^{2m_{k}},\cQ )\\&\subset
\frac{\lambda^{\frac12}}{(t\lambda)^{2k}}  \IS^2(R^{-1}\, (\log R)^{p_{k}},\cQ )
\end{align*}
with similar statements for $u_{2k-2}^2 v_{2k-1}^3$ and $u_{2k-2}
v_{2k-1}^4$.
 Summing up we obtain
\[
N_{2k-1}(v_{2k-1}) \in \frac{\lambda^{\frac12}}{(t\lambda)^{2k}}
\IS^2(R^{-1}\, (\log R)^{p_{k}},\cQ ) \subset
\frac{\lambda^{\frac12}}{(t\lambda)^{2k}} \IS^2(R^{-1}\, (\log R)^{p_{k}},\cQ ')
\]
This concludes the analysis of $N_{2k-1}(v_{2k-1})$. We continue
with the terms in $E^t v_{2k-1}$, where we can neglect the $a$
dependence. Therefore, it suffices to compute
\[
t^2 \partial_t^2 \left( \frac{\lambda^{\frac12}}{(t \lambda)^{2k}}
\IS^2(R\, (\log R)^{m_{k}} )\right) \subset \frac{\lambda^{\frac12}}{(t \lambda)^{2k}}
\IS^2(R\, (\log R)^{m_{k}})
\]
Finally, we consider the terms in $E^a v_{2k-1}$. With
\[
v_{2k-1}(r,t) = \frac{\lambda^{\frac12}}{(t \lambda)^{2k}} w(R,a),
\quad w \in S^2(R\, (\log R)^{m_{k}},\mathcal Q)
\]
we have
\begin{eqnarray*}
  t^2 E^a v_{2k-1}& =&   -2t\partial_t\left(\frac{\lambda^{\frac12}}{(t \lambda)^{2k}}\right)
  aw_a(R,a) + \frac{\lambda^{\frac12}}{(t \lambda)^{2k}}
  \big[ 2(\nu+1)aR w_{aR}(R,a)   \\
  && - 2Ra^{-1} w_{Ra}  - 2a^{-1} w_a(R,a)+
  (a^2-1) w_{aa}(R,a) + 2a w_a(R,a)\big]
\end{eqnarray*}
Since $\mathcal Q$ are even in $a$ we conclude that
\[
(1-a^2) \partial_{aa},\, a \partial_a,\, a^{-1}\pr_a\,:\; \mathcal
Q \to \mathcal Q'
\]
and therefore
\[
t^2 E^a v_{2k-1} \in \frac{\lambda^{\frac12}}{(t \lambda)^{2k}}
\IS^2(R\, (\log R)^{m_{k}},\mathcal Q')
\]
This concludes the proof of \eqref{e2k-1}. 

\medskip

{\bf Step 3:} {\em Define $v_{2k}$ so that \eqref{v2k} holds.}

\smallskip\noindent   We begin the analysis with $e_{2k-1}$ replaced by
its main asymptotic component at $R = \infty$:
\EQ{\label{f2k}
t^2  f_{2k-1} = \frac{\la^{\f12}}{\mu^{2k-1}} \Big(  \sum_{j=0}^{p_{k}} aq_j(a)
(\ln R)^j  +  &b \sum_{j=0}^{p_{k}+1} [\tilde q_j^1(a) + a\tilde q_j^2(a)]
(\ln R)^j\\
&+b^2 \sum_{j=0}^{p_{k}+1} \tilde{\tilde{q}}_j(a)(\ln R)^j  \Big) , \quad q_j, \ti q_{j}^{1,2}, \tilde{\tilde{q}}_j \in \mathcal Q'
}
This is chosen such that $t^2(e_{2k-1} - f_{2k-1})$ consists of terms either decaying at least like $R^{-1}(\ln R)^{p_k+1}$, or else gaining at least a factor $b^2$. 
Consider the equation \eqref{vkeven} with
$ f_{2k-1} $ on the right-hand side,
\[
t^2\left(-\partial_t^2+\partial_r^2 +\frac{2}r \partial_r \right) \ti v_{2k} = -t^2 f_{2k-1}
\]
Homogeneity considerations suggest that we should look for a
solution $\ti v_{2k}$ which has the form
\begin{align*}
\ti v_{2k} = \frac{\la^{\f12}}{\mu^{2k-1}} \Big( \sum_{j=0}^{p_{k}} W_{2k}^j(a) 
(\ln R)^j  +  &b\sum_{\kappa = 1,2}\sum_{j=0}^{p_{k}+1} \tilde W_{2k}^{j,\kappa}(a) 
(\ln R)^j\\
&+b^2\sum_{j=0}^{p_{k}+1} \tilde{\tilde{W}}_{2k}^{j}(a) 
(\ln R)^j  \Big) 
\end{align*}
The one-dimensional equations for $ W_{2k}^j$ are obtained by
matching the powers of $\ln R$. This gives the system of equations
\begin{align*}
  t^2 \left(-\partial_t^2+\partial_r^2 +\frac{2}r \partial_r \right) \left(\frac{\la^{\f12}}{\mu^{2k-1}}
    W_{2k}^{j}(a)\right) & = \frac{\la^{\f12}}{\mu^{2k-1}} (a q_{j}(a)
  -F_j(a)) \\
  t^2 \left(-\partial_t^2+\partial_r^2 +\frac{2}r \partial_r \right) \left(\frac{\la^{\f12}}{\mu^{2k}}
    \ti W_{2k}^{j,\kappa}(a)\right) & = \frac{\la^{\f12}}{\mu^{2k}} (a^{\kappa-1}\ti q_{j}^{\kappa}(a)
  -\ti F_j^{\kappa}(a)),\,\kappa = 1,2,\\
   t^2 \left(-\partial_t^2+\partial_r^2 +\frac{2}r \partial_r \right) \left(\frac{\la^{\f12}}{\mu^{2k+1}}
    \tilde{\tilde{W}}_{2k}^{j}(a)\right) & = \frac{\la^{\f12}}{\mu^{2k+1}} (\tilde{\tilde{q}}_{j}(a)
  -\tilde{\tilde{F}}_j(a)) \\
\end{align*}
where
\begin{equation}\label{eq:Fj_def}\begin{split}
F_j(a) &= (j+1)\left[((\nu+1)\nu(2k-1)+2a^{-2}) W_{2k}^{j+1} +(a^{-1}
-
  (1+\nu) a) \partial_a W_{2k}^{j+1}\right]\\
  &\qquad  + (j+2)(j+1) ((\nu+1)^2 +
a^{-2}) W_{2k}^{j+2} \\
\ti F_j^{\kappa}(a) &= (j+1)\left[(2k(\nu+1)\nu+2a^{-2}) \ti W_{2k}^{j+1,\kappa} +(a^{-1}
-
  (1+\nu) a) \partial_a \ti W_{2k}^{j+1,\kappa}\right]\\
  &\qquad  + (j+2)(j+1) ((\nu+1)^2 +
a^{-2}) \ti W_{2k}^{j+2,\kappa} \\
\tilde{\tilde{F}}_j(a) &= (j+1)\left[((\nu+1)\nu(2k+1)+2a^{-2}) \tilde{\tilde{W}}_{2k}^{j+1} +(a^{-1}
-
  (1+\nu) a) \partial_a \tilde{\tilde{W}}_{2k}^{j+1}\right]\\
  &\qquad  + (j+2)(j+1) ((\nu+1)^2 +
a^{-2}) \tilde{\tilde{W}}_{2k}^{j+2} \\
\end{split}
\end{equation}
Here we make the convention that $W_{2k}^{j} = 0$ for $j > p_{k}$ and $\ti W_{2k}^{j,\kappa} = \tilde{\tilde{W}}_{2k}^{j} = 0$ for $j > p_{k}+1$.
Then we solve the equations in this system successively for decreasing
values of $j$ from $p_{k}$ to $0$, respectively $p_{k}+1$ to~$0$.

Conjugating out the power of $t$ we get
\begin{align*}
t^2 \Big( -\Big(\partial_t+\frac{(2k-3/2)\nu-1/2}t\Big)^2+
  \partial_r^2 + \frac{2}r \partial_r  
\Big)W_{2k}^{j}(a) &= a q_{j}(a) -F_j(a) \\
t^2 \Big( -\Big(\partial_t+\frac{(2k-1/2)\nu-1/2}t\Big)^2+
  \partial_r^2 + \frac{2}r \partial_r  
\Big)\ti W_{2k}^{j,\kappa}(a) &= a^{\kappa-1}\ti q_{j}^{\kappa}(a) - \ti F_j^\kappa(a) \\
t^2 \Big( -\Big(\partial_t+\frac{(2k+1/2)\nu-1/2}t\Big)^2+
  \partial_r^2 + \frac{2}r \partial_r  
\Big)\tilde{\tilde{W}}_{2k}^{j}(a) &= \tilde{\tilde{q}}_{j}(a) -\tilde{\tilde{F}}_j(a) \\
\end{align*}
which we rewrite as an equation in the $a$ variable,
\EQ{
  L_{\beta} W_{2k}^{j} &= a q_{j}(a) -F_j(a),\quad \beta=(2k-3/2)\nu-1/2 \\
  L_{\beta} \ti W_{2k}^{j,\kappa} &= a^{\kappa-1}\ti q_{j}^{\kappa}(a) - \ti F_j^{\kappa}(a),\quad \beta=(2k-1/2)\nu-1/2\\
  L_{\beta} \tilde{\tilde{W}}_{2k}^{j} &= \tilde{\tilde{q}}_{j}(a) -\tilde{\tilde{F}}_j(a),\quad \beta=(2k+1/2)\nu-1/2 
  \label{w2kj}
 }
  where the one-parameter family of
operators $L_{\beta}$ is defined as above, i.e., 
\begin{equation}\label{eq:Lbeta_def}
L_\beta =(1-a^2) \partial_a^2 + 2(a^{-1} + a \beta - a)
\partial_a -\beta^2 + \beta
\end{equation}
see \eqref{Lbg}. 
We claim that solving this system with $0$ Cauchy data at $a=0$ yields
solutions which satisfy
\EQ{
  W_{2k}^j  &\in a\, \cQ, \qquad j=\overline{0,p_{k}}\\
 \ti W_{2k}^{j,\kappa}  &\in a^{\kappa-1}\, \cQ, \qquad j=\overline{0,p_{k}+1},\,\kappa = 1,\,2,\\
 \tilde{\tilde{W}}_{2k}^j  &\in \cQ, \qquad j=\overline{0,p_{k}+1}\\
  \label{cqk}
  }
The choice of~$\beta$ in~\eqref{w2kj} explains the appearance of the set~\eqref{menge}. 
The fact that we need integer multiples of the exponents in~\eqref{w2kj} is a result of
the nonlinearity which requires taking powers (moreover, $\cQ$ is not an algebra otherwise). 

The claim \eqref{cqk} is established as in the computation of~$v_{2}$ above, see~\cite[Lemma 3.6]{KST}
and~\cite[Lemma 3.9]{KST0}
for details. 

As in the case of $v_{2}$, we need to make some adjustments. First, we modify $\ti v_{2k}$ to ensure an
even expansion\footnote{We use the notation $\lan R\ran = \sqrt{1+R^2}$.} around~$R=0$:
\begin{align*}
\ti v_{2k} =& \frac{\la^{\f12}}{\mu^{2k}} \Big( R^{2}\lan R\ran^{-1}\sum_{j=0}^{p_{k}} a^{-1}W_{2k}^j(a) 
(\ln R)^j  +  \sum_{j=0}^{p_{k}+1} \tilde W_{2k}^{j,1}(a) 
(\ln R)^j\\
&+R^{2}\lan R\ran^{-1} \sum_{j=0}^{p_{k}+1} a^{-1}\tilde W_{2k}^{j,2}(a)(\ln R)^j + b\sum_{j=0}^{p_{k}+1} \tilde{\tilde{W}}_{2k}^{j}(a) 
(\ln R)^j   \Big) 
\end{align*}
This is not admissible because of the singularity of $\log R$ at $R=0$. We therefore modify this expression further:
\begin{align*}
&v_{2k}\\& =  \frac{\la^{\f12}}{\mu^{2k}} \Big( R^{2}\lan R\ran^{-1}\sum_{j=0}^{p_{k}} a^{-1}W_{2k}^j(a) 
\big(\frac12\ln(1+R^2)\big)^j  +  \sum_{j=0}^{p_{k}+1} \tilde W_{2k}^{j,1}(a) 
\big(\frac12\ln(1+R^2)\big)^j\\
&+R^{2}\lan R\ran^{-1} \sum_{j=0}^{p_{k}+1} a^{-1}\tilde W_{2k}^{j,2}(a)\big(\frac12\ln(1+ R^2)\big)^j + b\sum_{j=0}^{p_{k}+1} \tilde{\tilde{W}}_{2k}^{j}(a) 
\big(\frac12\ln(1+ R^2)\big)^j   \Big) 
\end{align*}

\medskip

{\bf Step 4: } {\em For $v_{2k}$ defined as above we show that the
corresponding error $e_{2k}$ satisfies \eqref{e2k}. }

\medskip\noindent 

We can write
$e_{2k}$ in the form
\[
t^2 e_{2k} = t^2 \left(e_{2k-1} - e_{2k-1}^0 \right) + t^2 \left(
  e_{2k-1}^0- \left(-\partial_t^2+\partial_r^2 +\frac{1}r \partial_r \right) v_{2k} \right) + t^2 N_{2k}(v_{2k})
\]
where $N_{2k}$ is defined by \eqref{eeven} and
\EQ{\label{e2k-10}
t^2 e_{2k-1}^0  &= \frac{\la^{\f12}}{\mu^{2k}} \Big(  \sum_{j=0}^{p_{k}} R^{2}\lan R\ran^{-1}q_j(a)
\big(\f12\ln(1 +R^{2})\big)^j \\
&\qquad +  \sum_{j=0}^{p_{k}+1} \tilde q_j^1(a)
\big(\f12\ln(1 +R^{2})\big)^j \\
&\qquad+ b\sum_{j=0}^{p_{k}+1} R^{2}\lan R\ran^{-1}\tilde q_j^2(a)
\big(\f12\ln(1 +R^{2})\big)^j\\
&\qquad+ b\sum_{j=0}^{p_{k}+1} \tilde{\tilde{q}}_j(a)
\big(\f12\ln(1 +R^{2})\big)^j  \Big) ,
}
with $q_j, \ti q_{j}^{1,2}, \tilde{\tilde{q}}_j(a) \in \mathcal Q'$. 
We begin with the first term in $e_{2k}$, which has the form
\[
t^2(e_{2k-1} - e_{2k-1}^0) \in \frac{\la^{\f12}}{\mu^{2k}}
\big[\IS^0(R^{-1} (\ln R)^{p_{k}+2},\mathcal Q') + b^2 \IS^0(R (\ln
R)^{p_{k}},\mathcal Q')\big]
\]
The second term is admissible for \eqref{e2k}. It
remains to show that
\begin{equation}
  \IS^0(R^{-1} (\ln R)^{q_{k}},\mathcal Q') \subset \IS^0(R^{-1}
  (\ln R)^{q_{k}},\mathcal Q) + b^2 \IS^0(R (\ln R)^{q_{k}},\mathcal
  Q') 
  \label{qqp}\end{equation} 
  which can be seen as follows: 
  for $w \in \IS^0(R^{-1} (\ln
R)^{q_{k}},\mathcal Q')$ we write
\[
w = (1-a^2)w + \frac{1}{(t \lambda)^2} R^{2} w
\]
Then
\[
(1-a^2)w \in \IS^0(R^{-1} (\ln R)^{q_{k}},\mathcal Q), \qquad
\frac{1}{(t \lambda)^2} R^{2} w \in b^2 \IS^{0}(R (\ln R)^{q_{k}},\mathcal
Q')
\]
as desired. The second term in $e_{2k}$ would equal $0$ if we were
to replace $\frac12\ln(1+R^2)$ by $\ln R$ in both $e_{2k}^0$ and
$v_{2k}$; in addition, as in the case of $e_{2}$ above, error terms arise
due to the replacement of $R$ with $R^{2}\lan R\ran^{-1}$. We leave those
latter terms to the reader. To illustrate  the former,  
consider the difference which is obtained upon replacing the
derivatives of $ \frac12\ln (1+R^2)$ by derivatives of $\ln R$ in the 
expression
\[
 t^2 \left(-\partial_t^2+\partial_r^2 +\frac{1}r \partial_r
\right) \left( \frac{\la^{\f12}}{(t \lambda)^{2k-1}} \sum_{j}
  W_{2k}^j(a) \Big(\frac12\ln (1+R^2)\Big)^j \right)
\]
Computing these differences one finds that the second term in $e_{2k}$
is a sum of expressions of the form
\EQ{\nn
&\frac{\la^{\f12}}{(t \lambda)^{2k-1}} \sum_{j} \Big( \frac{
  W_{2k}^j(a)}{a^{2}} \Big[S(R^{-2}) (\ln (1+R^2))^{j-1} + S(R^{-2}) (\ln
(1+R^2))^{j-2}\Big] \\
&\qquad + \frac{\partial_a W_{2k}^j(a)}{a} S(R^{-2})
(\ln (1+R^2))^{j-1} \Big)
}
Since $W_{2k}^j$ are cubic at $0$ it follows that we can pull out an
$a$ factor and see that this part of the error is in
\[
\frac{1}{(t \lambda)^{2k}} \IS^1(R^{-1} (\ln R)^{q_{k}},\mathcal Q')
\]
which is admissible by \eqref{qqp}.
The nonlinear expression is left to the reader. 

\bigskip In summary we arrive at the following result.

\begin{prop}
For any $N\ge1$ there exists a positive integer $k$ such that 
$u_{k}$ as constructed above satisfies 
\[
\int_{r\le t} |(\cL_{quintic} \, u_{k})(r,t)|^{2}\, r^{2} dr = O(t^{N}) \quad t\to0+
\] 
Moreover, $u_{k}-  W_{\lambda(t)} \in \frac{\la^{\f12}}{\mu^{2}}O(R)$, with $R=r\la$.  
\end{prop}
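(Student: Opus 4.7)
The plan is to read both assertions directly from the inductive output \eqref{v2k-1}--\eqref{e2k}: no further analysis is needed beyond a change of variables on the light cone and a counting of powers of~$t$. Fix a large even index $k=2K$. From \eqref{e2k},
\[
t^{2} e_{2K} \in \tfrac{\lambda^{1/2}}{\mu^{2K}}\bigl[\IS^{0}(R^{-1}(\log R)^{q_{K}},\cQ) + b^{2}\IS^{0}(R(\log R)^{q_{K}},\cQ')\bigr],
\]
so I write $t^{2} e_{2K}(r,t) = \lambda^{1/2}\mu^{-2K}\, g(R,a,b)$ with $g$ in the bracketed class.

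I first bound $g$ uniformly on $0\le a\le 1$. In the $a$ variable the $\cQ$, $\cQ'$ factors are integrable against $a^{2}\,da$ because every exponent $\beta(i)+1$ from \eqref{menge} exceeds $1/2$. In $R$, the first summand is bounded since $R^{-1}(\log R)^{q_{K}}\to 0$, and the second satisfies $b^{2}R(\log R)^{q_{K}}=ab\,(\log R)^{q_{K}}\le(\log\mu)^{q_{K}}$ on the cone because $bR=a\le 1$. Consequently $\int_{0}^{1} g(R,a,b)^{2}\,a^{2}\,da\le C(\log(1/t))^{2q_{K}}$ uniformly in $t$.

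Now I switch to $a=r/t$, so $r^{2}\,dr=t^{3}a^{2}\,da$; with $\lambda=t^{-1-\nu}$ and $\mu=t^{-\nu}$ this yields
\[
\int_{r\le t}|\cL_{quintic}u_{2K}|^{2}\,r^{2}\,dr \;=\; \frac{\lambda}{t\,\mu^{4K}}\int_{0}^{1} g^{2}\,a^{2}\,da \;\lec\; t^{(4K-1)\nu-2}\,(\log(1/t))^{2q_{K}}.
\]
Choosing $K$ with $(4K-1)\nu>N+2$ absorbs the logarithm and gives the first claim. For the second, I decompose $u_{k}-W_{\lambda(t)}=\sum_{j=1}^{k}v_{j}$. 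By \eqref{fasymp}, $v_{1}\in\lambda^{1/2}\mu^{-2}S^{2}(R)\subset\lambda^{1/2}\mu^{-2}\,O(R)$ on the cone; each $v_{j}$ for $j\ge 2$ carries at least one more factor of $\mu^{-2}$ (up to logs in $R$) by \eqref{v2k-1}, \eqref{v2k}, and since $\mu^{-2}R^{m}\le R$ holds on $R\le\mu$ for the relevant exponents $m$, each term also lies in $\lambda^{1/2}\mu^{-2}\,O(R)$.

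The main obstacle worth attention is the $b^{2}\IS^{0}(R)$ summand, which is a priori dangerous because $R$ can reach $\mu$; it is neutralized by the identity $b^{2}R=ab\le 1$ on the light cone, converting the ostensibly growing factor into a bounded one. Everything else reduces to the bookkeeping already compiled in Steps~0--4, and an identical calculation starting from \eqref{e2k-1} handles odd indices with the exponent $(4K-3)\nu-2$ in place of $(4K-1)\nu-2$.
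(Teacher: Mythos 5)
Your proposal is correct and follows exactly the reasoning the paper leaves implicit when it says ``In summary we arrive at the following result'' after Step~4: read $t^{2}e_{k}$ off the inductive bounds \eqref{e2k-1}, \eqref{e2k}, change variables $r^{2}\,dr = t^{3}a^{2}\,da$ on the cone, use the square-integrability of the $\cQ'$-singularity at $a=1$ (which follows from $\beta(i)>-\tfrac12$) and the observation $bR = a\le 1$ to tame the $b^{2}\IS^{0}(R\cdots)$ part, and count powers of $t$ to obtain $t^{(2k-1)\nu-2}$ up to logarithms. The second assertion is likewise a direct reading of \eqref{v2k-1}, \eqref{v2k} together with $R\le\mu$, as you note; the only minor caveat is that the paper's $O(R)$ should be understood to absorb the polylogarithmic factors coming from the higher~$v_{j}$ (which are in any case killed by the accompanying extra powers of~$\mu^{-2}$), but this does not affect the argument.
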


The following proposition is going to be 
the key for the proof of Theorem~\ref{thm:Main}. It states that the approximate solutions of
the previous proposition can be turned into actual solutions inside the light cone. 

\begin{prop}\label{prop:key} Fix $N>1$ sufficiently large. Then there is $k\geq 1$ sufficiently large and there exists a function $\eps(t, r)$ with 
\[
\eps(t, \cdot)\in t^{N}H^{1+\frac{\nu}{2}-}_{R^2 dR},\,\eps_t(t, \cdot)\in t^{N}H^{\frac{\nu}{2}-}_{R^2 dR}
\]
such that the function 
\[
u(t, r) = u_{2k-1}(t,r) +  \eps(t, r)
\]
is a solution of \eqref{eq:foccrit} inside the light cone. 
\end{prop}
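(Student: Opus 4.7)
The plan is to implement the four-step procedure outlined in the introduction, keeping the first three steps essentially as in~\cite{KST} and modifying only the fourth. First, pass to the self-similar coordinates $\tau=\tfrac{1}{\nu}t^{-\nu}$, $R=\lambda(t)r$, and conjugate away the appropriate power of~$\lambda$. Writing $u=u_{2k-1}+\eps$ and setting $\wt\eps(\tau,R)=\lambda(t)^{-1/2}\eps(t,r)$, the equation $\cL_{quintic}(u)=0$ becomes, schematically,
\[
\bigl(\p_\tau^2+\cL\bigr)\wt\eps = \lambda^{-5/2}\bigl[-e_{2k-1}-5u_{2k-1}^4\wt\eps^{\mathrm{rescaled}} +\cdots-\wt\eps^5\bigr] + (\text{transport and friction terms})
\]
with $\cL=-\p_{RR}-5W^4(R)$ on $(0,\infty)$. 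We impose the vanishing Cauchy data $\wt\eps(\infty,\cdot)=\p_\tau\wt\eps(\infty,\cdot)=0$, which by the preceding proposition is consistent because $t^2e_{2k-1}$ is $O(t^N)$ on the cone for $k$ large.

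Next, invert $\p_\tau^2+\cL$ by the distorted Fourier transform associated with $\cL$, exactly as developed in~\cite{KST}. This transform has been analysed in detail there: $\cL$ has a single negative eigenvalue $-\kappa^2$, a zero-energy resonance which renders the spectral density singular as $\xi\to0+$, and an absolutely continuous half-line spectrum with spectral density $\rho(\xi)$. The Duhamel formula in the Fourier picture produces a parametrix $\wt\eps=\cK(\text{source})$, together with transference identities for $R\p_R$ and $R^{-1}$ that translate between Sobolev norms and weighted norms of $\cF\wt\eps$ on the Fourier side. By the inductive energy estimate and the structure of~$e_{2k-1}$ proven in Step~0 through Step~4 above, the first iterate
\[
\wt\eps^{(1)} := \cK(-\lambda^{-5/2}e_{2k-1})
\]
lies in the space $\tau^{-M}\bigl(H^{1+\nu/2-}_{R^2dR}\times H^{\nu/2-}_{R^2dR}\bigr)$ for a suitable $M=M(k,N)\to\infty$ as $k\to\infty$.

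The main obstacle, and the sole point where the present argument improves on~\cite{KST}, is to close the iteration on the quintic nonlinearity without resorting to the embedding $(H^{1+2\alpha})^5\hookrightarrow H^{2\alpha}$ that forced $\nu>\tfrac12$. To overcome this, decompose the first iterate as
\[
\wt\eps^{(1)} = \wt\eps^{(1)}_{\mathrm{smooth}} + \wt\eps^{(1)}_{\mathrm{cone}},
\]
where $\wt\eps^{(1)}_{\mathrm{smooth}}$ is spatially $C^\infty$ with rapid Fourier decay, obtained by inverting the parametrix against the smooth part of the source, while $\wt\eps^{(1)}_{\mathrm{cone}}$ collects the finitely many contributions carrying the fractional factors $(1-a)^{\beta(i)+1}$ from the class~$\cQ$. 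Since those non-smooth factors are supported away from $a=0$ and are analytic there, $\wt\eps^{(1)}_{\mathrm{cone}}$ satisfies a uniform $L^\infty$ bound on the region $R\ll \tau$ (the neighbourhood of the spatial origin), even though its global Sobolev exponent is limited to $1+\tfrac{\nu}{2}-$. Using this split, $(\wt\eps^{(1)})^5$ is estimated by distributing factors: interactions with at least one smooth copy are trivially in any Sobolev class by H\"older and Leibniz, while the all-cone term $(\wt\eps^{(1)}_{\mathrm{cone}})^5$ is controlled in $H^{\nu/2-}_{R^2dR}$ by pairing the $L^\infty$-near-origin bound against the fractional derivative acting on a single factor, with the remaining factors absorbed in $L^\infty$. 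This provides the room previously supplied by $\nu>\tfrac12$.

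Finally, set up the contraction. Write $\wt\eps=\wt\eps^{(1)}+\delta$ and solve for $\delta$ by the fixed-point equation $\delta = \cK\bigl(\mathcal{N}(\wt\eps^{(1)}+\delta)-\mathcal{N}(0)\cdot\wt\eps^{(1)}\bigr)$ in the ball
\[
\mathcal{B} := \bigl\{\delta:\ \|\delta\|_{\tau^{-M}H^{1+\nu/2-}_{R^2dR}} + \|\p_\tau\delta\|_{\tau^{-M}H^{\nu/2-}_{R^2dR}}\le \tau_0^{-\sigma}\bigr\}
\]
for $\sigma>0$ small and $\tau_0$ large (equivalently $t_0$ small). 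The parametrix bounds from Step~2 combined with the decomposition of Step~3 show that $\cK\circ\mathcal{N}$ is a contraction on $\mathcal{B}$ once $k$ is large enough that $M$ beats the polynomial losses incurred by the quintic term and by the singularity of the spectral measure at zero. The fixed point $\delta$ exists, produces $\eps$ with the stated regularity and decay, and $u=u_{2k-1}+\eps$ is the required exact solution inside the cone. The delicate interplay between the zero-energy resonance (which slows temporal decay in the Fourier picture) and the low Sobolev exponent is precisely what is tamed by the $L^\infty$-near-origin control of $\wt\eps^{(1)}_{\mathrm{cone}}$.
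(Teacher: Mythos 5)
Your proposal captures the right high-level idea — the improvement over \cite{KST} does indeed come from splitting the first iterate into a smoother piece plus a piece with good $L^\infty$ control near the spatial origin, and using this to estimate the quintic nonlinearity without invoking the embedding $(H^{1+2\alpha})^5 \hookrightarrow H^{2\alpha}$. This is essentially what the paper does. But there is a genuine gap in the central step, and the decomposition you propose is not quite the one that makes the argument go through.

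You obtain $\wt\eps^{(1)}_{\mathrm{cone}}$ by applying the parametrix to the singular part of the source (the terms carrying $(1-a)^{\beta(i)+1}$), and you then assert it is bounded in $L^\infty$ for $R\ll\tau$ ``since those non-smooth factors are supported away from $a=0$ and are analytic there.'' That property, however, is a statement about the \emph{source}, not about the iterate: the parametrix $\cK$ is a nonlocal Fourier-inversion operator, so there is no a priori reason a source supported near the light cone produces an iterate that is smooth and small near the origin. You are implicitly invoking a Huygens-type localization, but the relevant operator is not the free wave propagator (the potential $5W^4$ and the dynamic rescaling $\lambda(t)$ both enter), so this cannot simply be cited. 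The paper's proof of Lemma~\ref{lem:key} supplies exactly what is missing: it computes the distorted Fourier transform of the model singular source $C(\tau)\tau^{-N-2}(\log(1-a))^i(1-a)^{(\nu-1)/2}$ explicitly using the large-$\xi$ asymptotics of $\phi(R,\xi)$, applies the parametrix \eqref{eq:para}, and identifies a part $x^{(2)}(\tau,\xi)$ whose $\xi$-decay is too weak (only $\xi^{-5/4+}$) but which comes with the oscillatory factor $\sin(\nu\tau\xi^{1/2})$ or $\cos(\nu\tau\xi^{1/2})$. The near-origin $L^\infty$ bound on $R^{-1}\mathcal{F}^{-1}(x^{(2)})$ for $R<\nu\tau/2$ is then obtained by non-stationary-phase integration by parts in $\xi^{1/2}$, precisely because the total phase $(\pm R\pm\nu\tau)\xi^{1/2}$ is non-degenerate in that region. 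In other words the decomposition in the paper happens on the Fourier side of the \emph{iterate} (splitting $x=x^{(1)}+x^{(2)}$ by decay rate), and the localization near the origin is a quantitative non-stationary-phase estimate, not a qualitative support consideration.

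Two smaller points. First, your $L^\infty$ control is stated only for $R\ll\tau$; to run the quintic estimates you also need a global bound on $\wt\eps^{(1)}_{\mathrm{cone}}/R$, which in the paper comes from combining the near-origin $L^\infty$ with the radial Sobolev embedding applied to the $H^{1+\nu/2-}$ bound for $R\gtrsim 1$. This step should be made explicit. Second, the paper does not center a contraction at the first iterate but runs a Picard iteration on the transport equation \eqref{eq:transport}, with Lemmas~\ref{lem:eps_2} and~\ref{lem:eps_k} providing a geometric gain $(1/N)^k$ per step from the repeated $\sigma$-integrations; this is equivalent in spirit to your Banach fixed point, and your bookkeeping of norms is consistent with it, so this is a matter of presentation rather than a defect.
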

\begin{proof}[Proof of Theorem~\ref{thm:Main} assuming Proposition~\ref{prop:key}]
As in \cite{KST} one observes that $u_{k}$ can be extended outside the light cone such that 
\[
\int_{r>t}[|\nabla_x u_{k}|^2 + |\nabla_x W_{\lambda(t)}|^2 + (\partial_t W_{\lambda(t)})^2+ |\nabla_{t}u_{k}|^2]\,dx<\delta
\]
for any prescribed $\delta>0$, provided $0<t<t_0$ with $t_0 = t_0(\delta, k, \nu)$ sufficiently small. But then, picking $t_0$ sufficiently small, we can arrange that the $\dot{H}^1\times L^2$-norm of 
$\big(u(t_0, \cdot), u_t(t_0, \cdot)\big)$ is less than $\delta$ in the region $r>t_0$. Then Huyghen's principle and the small energy global regularity imply that the corresponding solution remains of class $H^{1+\frac{\nu}{2}-}$ on the exterior of the light cone. 
\end{proof}

The remainder of the paper is devoted to proving Proposition~\ref{prop:key}. The idea is that up to smoother errors, the principal source term $e_{2k-1}$ (we take $u_{2k-1}$ to be the approximate solution) may be reduced to an explicit algebraic expression. In fact, due to Huyghen's principle, we can and shall modify $e_{2k-1}$ outside the light cone to simplify the analysis a bit. We shall then see that the error term $\eps(t, r)$, which will be constructed via a suitable iteration scheme, can be written as a sum of a smoother and a ``small'' term (in the sense of amplitude). This decomposition in fact results naturally from the structure of the first iterate of the scheme used to construct $\eps(t,r)$. 

\section{Setting up the iteration scheme; formulation on the Fourier side}

\noindent Introduce the variables $\tau = \nu^{-1}t^{-\nu}$, $R = \lambda(t)r$, and write $$\tilde{\eps}(\tau, R): = R\eps(t(\tau), r(\tau, R))$$ Then we get the following\footnote{In fact, the term $\partial_\tau(\dot{\lambda}\lambda^{-1})\tilde{\eps}$ was inadvertently omitted in \cite{KST}, which has no bearing on the proof there, however.} equation for $\tilde{\eps}$: 
\begin{equation}\label{eq:Rtauwave}\begin{split}
&(\partial_{\tau} + \dot{\lambda}\lambda^{-1}R\partial_R)^2 \tilde{\eps} - (\partial_{\tau} + \dot{\lambda}\lambda^{-1}R\partial_R)\tilde{\eps} + \calL\tilde{\eps}\\
&=\lambda^{-2}(\tau)R[N_{2k-1}(\eps) + e_{2k-1}]+\partial_\tau(\dot{\lambda}\lambda^{-1})\tilde{\eps};
\end{split}\end{equation}
where the operator $\calL$ is given by 
\[
\calL = -\partial_R^2 - 5W^4(R)
\]
and we have 
\[
RN_{2k-1}(\eps) = 5(u_{2k-1}^4 - u_0^4)\tilde{\eps} + RN(u_{2k-1}, \tilde{\eps}),
\]
\[
RN(u_{2k-1}, \tilde{\eps}) = R(u_{2k-1}+\frac{\tilde{\eps}}{R})^5 - R u_{2k-1}^5 - 5u_{2k-1}^4\tilde{\eps}
\]
Introducing the operator, with $\beta_{\nu}(\tau) = \frac{\dot{\lambda}(\tau)}{\lambda(\tau)}$,
\[
\mathcal{D} = \partial_{\tau} + \beta_{\nu}(\tau)(R\partial_R - 1),
\]
we can also write the above equation as 
\begin{equation}\label{eq:Rtauwave1}
\mathcal{D}^2\tilde{\eps} + \beta_{\nu}(\tau)\mathcal{D}\tilde{\eps} + \calL\tilde{\eps} = \lambda^{-2}(\tau)\big[5(u_{2k-1}^4 - u_0^4)\tilde{\eps} + RN(u_{2k-1}, \tilde{\eps}) + R e_{2k-1}\big]
\end{equation}

In the following, we shall freely borrow the facts about the spectral theory and associated distorted Fourier transform contained in \cite{KST} as well as \cite{DoKr}. In particular, we recall that there exists a Fourier basis $\phi(R, \xi)$ and associated spectral measure $\rho(\xi)$ satisfying the asymptotic expansions and growth conditions explained in \cite[Section 4]{KST} such that 
\[
\tilde{\eps}(\tau, R) = x_d(\tau)\phi_d(R) + \int_0^\infty x(\tau, \xi)\phi(R, \xi)\rho(\xi)\,d\xi
\]
For the asymptotic behavior of $\phi(R,\xi)$ in various regimes see~\eqref{phif+} and \eqref{phiphi0}. 
Here the functions $x(\tau, \xi)$ are the (distorted) Fourier coefficients associated with $\tilde{\eps}$, and $\phi_d(R)$ is the unique ground state with associated negative eigenvalue for the operator $\calL$. We also note the important asymptotic estimates 
\begin{equation}\label{eq:rhoasympto}
\rho(\xi)\simeq \xi^{-\frac{1}{2}},\,\xi\ll 1,\,\rho(\xi)\simeq \xi^{\frac{1}{2}},\,\xi\gg1.
\end{equation}
as well as the fact that near $\xi = 0$ as well as $\xi = \infty$ the spectral measure behaves like a symbol upon differentiation. 
We shall henceforth write 
\[
\underline{x}(\tau, \xi): = \left(\begin{array}{c}x_d(\tau)\\ x(\tau, \xi)\end{array}\right),\qquad \underline{\xi} = \binom{\xi_d}{\xi}
\]
Then proceeding as in \cite{DoKr}, in particular section 3.5 in loc. cit.     
which uses a variation on the procedure in \cite{KST}, we derive the following transport equation for $x(\tau, \xi)$: 
\begin{equation}\label{eq:transport}
\big(\mathcal{D}_{\tau}^2 + \beta_{\nu}(\tau)\mathcal{D}_{\tau} + \underline{\xi}\big)\underline{x}(\tau, \xi) = \calR(\tau, \underline{x}) + f(\tau, \underline{\xi}),
\end{equation}
where we have
\begin{equation}\label{eq:Rterms}
\calR(\tau, \underline{x})(\xi) = \Big(-4\beta_{\nu}(\tau)\mathcal{K}\mathcal{D}_{\tau}\underline{x} - \beta_{\nu}^2(\tau)(\mathcal{K}^2 + [\mathcal{A}, \mathcal{K}] + \mathcal{K} +  \beta_{\nu}' \beta_{\nu}^{-2}\mathcal{K})\underline{x}\Big)(\xi)
\end{equation}
 with $\beta_{\nu}(\tau) = \frac{\dot{\lambda}(\tau)}{\lambda(\tau)}$, and 
 \begin{equation}\label{eq:fterms}
 f(\tau, \xi) = \mathcal{F}\big( \lambda^{-2}(\tau)\big[5(u_{2k-1}^4 - u_0^4)\tilde{\eps} + RN(u_{2k-1}, \tilde{\eps}) + R e_{2k-1}\big]\big)\big(\xi\big)
 \end{equation}
Also the key operator 
 \[
 \mathcal{D}_{\tau} = \partial_{\tau} + \beta_{\nu}(\tau)\mathcal{A},\quad \mathcal{A} = \left(\begin{array}{cc}0&0\\0&\mathcal{A}_c\end{array}\right)
 \]
 and we have 
 \[
 \mathcal{A}_c = -2\xi\partial_{\xi} - \Big (\frac{5}{2}  + \frac{\rho'(\xi)\xi}{\rho(\xi)} \Big)
 \]
 Finally, we observe that the ``transference operator'' $\mathcal{K}$ is given by the following type of expression 
 \begin{equation}\label{eq:Kstructure}
 \mathcal{K} = \left(\begin{array}{cc}\mathcal{K}_{dd}&\mathcal{K}_{dc}\\
 \mathcal{K}_{cd}&\mathcal{K}_{cc}\end{array}\right)
 \end{equation}
 with mapping properties specified later on. 
 \\
 
In the following, we shall take advantage of the observation that the equation
\[
\big(\mathcal{D}_{\tau}^2 + \beta_{\nu}(\tau)\mathcal{D}_{\tau} + \underline{\xi}\big)\underline{x}(\tau, \xi) = \underline{f}(\tau, \xi) = \left(\begin{array}{c}f_d(\tau)\\ f(\tau, \xi)\end{array}\right)
\]
can be solved completely explicitly; in particular, imposing vanishing boundary data at $\tau = \infty$, we obtain the following expression for the continuous part $x(\tau, \xi)$: 
\begin{equation}\label{eq:para}
x(\tau, \xi) = \xi^{-\frac{1}{2}}\int_{\tau}^\infty \frac{\lambda^{\frac{3}{2}}(\tau)}{\lambda^{\frac{3}{2}}(\sigma)}\frac{\rho^{\frac{1}{2}}(\frac{\lambda^{2}(\tau)}{\lambda^{2}(\sigma)}\xi)}{\rho^{\frac{1}{2}}(\xi)}\sin\Big[\lambda(\tau)\xi^{\frac{1}{2}}\int_{\tau}^{\sigma}\lambda^{-1}(u)\,du\Big]f\big(\sigma, \frac{\lambda^{2}(\tau)}{\lambda^{2}(\sigma)}\xi\big)\,d\sigma
\end{equation}
We shall justify this below. 
On the other hand, one immediately obtains the elementary implicit relation 
\begin{equation}\begin{split}\label{eq:x_d}
&x_d(\tau) = \int_{\tau}^\infty H_d(\tau, \sigma)\tilde{f}_d(\sigma)\,d\sigma,\; \; H_d(\tau, \sigma) = -\frac{1}{2}|\xi_d|^{-\frac{1}{2}}e^{-|\xi_d|^{\frac{1}{2}}|\tau-\sigma|}\\
&\tilde{f}_d(\sigma) = f_d(\sigma) -  \beta_{\nu}(\sigma)\partial_{\tau}x_d(\sigma)
\end{split}\end{equation}
To derive \eqref{eq:para}, define an operator
\[
(Mf)(\tau,\xi):= \la^{-\f52}(\tau)\rho(\xi)^{\f12} f(\tau,\xi)
\]
and $\calD_1 := \p_\tau - 2\beta_\nu(\tau)\xi\p_\xi$. Then 
\[
M^{-1} \calD_1 M = \calD:= \p_\tau +\beta_\nu(\tau) \mathcal{A}_c 
\]
Second, define
\[
(Sf)(\tau,\xi):= f(\tau,\la^{-2}(\tau)\xi)
\]
Then $S^{-1} \p_\tau S = \calD_1$ whence
\[
M^{-1}S^{-1} \p_\tau \; S M = \calD 
\]
It follows that 
\EQ{\label{SM}
(SM)^{-1} (\p_\tau^2 + \beta_\nu(\tau)\p_\tau + \la^{-2}(\tau)\xi) SM = \calD^2 + \beta_\nu(\tau)\calD+\xi
}
Finally, one checks that $H(\tau,\xi):=\sin(\xi^{\f12} \om(\tau))$, $\om(\tau):=\int^\tau \la^{-1}$ satisfies
\[
(\p_\tau^2 + \beta_\nu(\tau)\p_\tau + \la^{-2}(\tau)\xi) H(\tau,\xi)=0
\]
Hence,
\[
H(\xi,\sigma,\tau) = \xi^{-\f12} \sin\Big(\xi^{\f12} \int_{\tau}^\sigma \la^{-1}(u)\, du \Big),\quad \sigma\ge\tau
\]
is the fundamental solution of the operator in the parentheses on the left-hand side of~\eqref{SM}. 
In order to measure the size of $x$, we use the norms 
\[
\|x\|_{L^{2,\alpha}_{d\rho}} = \|\langle\xi\rangle^{\alpha}x\|_{L^2_{d\rho}}
\]
The following lemma shall be used throughout the sequel without further mention: 
\begin{lem}\label{lem:basicpara} Assume $N$ is sufficiently large and write 
\[
\|\underline{f}\|_{L^{2, \alpha; N}_{d\rho}}: = \sup_{\tau>\tau_0}\tau^N \big(\|f(\tau, \cdot)\|_{L^{2,\alpha}_{d\rho}} + |f_d(\tau)|\big)
\]
Then defining $x(\tau, \xi)$ via \eqref{eq:para} and $x_d(\tau)$ as the unique fixed point of \eqref{eq:x_d}, we have 
\[
\|x\|_{L^{2,\alpha+\frac{1}{2}; N-2}_{d\rho}}\lesssim \|\underline{f}\|_{L^{2, \alpha; N}_{d\rho}}
\]
In fact, for the discrete spectral part we can improve this to 
\[
|x_d(\tau)|\lesssim \tau^{-N} \|\underline{f}\|_{L^{2, \alpha; N}_{d\rho}}
\]
\end{lem}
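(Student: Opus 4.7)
My plan is to treat the continuous and discrete components of $\underline x$ separately. First I would verify \eqref{eq:para} using the conjugation \eqref{SM} already given in the excerpt: setting $y=SMx$ converts $(\mathcal{D}^2+\beta_\nu\mathcal{D}+\xi)x=f$ into the standard second-order ODE $(\partial_\tau^2+\beta_\nu\partial_\tau+\lambda^{-2}\xi)y=SMf$, whose homogeneous solutions $\sin(\xi^{1/2}\omega(\tau))$ and $\cos(\xi^{1/2}\omega(\tau))$ with $\omega(\tau)=\int^\tau\lambda^{-1}$ produce the Green's function $H(\xi,\sigma,\tau)=\xi^{-1/2}\sin(\xi^{1/2}\int_\tau^\sigma\lambda^{-1})$ vanishing at $\tau=\infty$. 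Inverting $SM$ then unpacks to \eqref{eq:para} after accounting for the rescaling $\xi\mapsto\lambda^{-2}(\tau)\xi$ produced by $S$.

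For the continuous part, I would apply Minkowski in $\sigma$ to the $L^2_{d\rho(\xi)}$-norm of $\langle\xi\rangle^{\alpha+1/2}x(\tau,\xi)$ and, for each fixed $\sigma\geq\tau$, change variables $\eta=(\lambda(\tau)/\lambda(\sigma))^2\xi$. Using the spectral asymptotics $\rho(\xi)\simeq\xi^{-1/2}$ for $\xi\ll1$ and $\rho(\xi)\simeq\xi^{1/2}$ for $\xi\gg1$, together with $\lambda(\tau)\simeq\tau^{(1+\nu)/\nu}$, the Jacobian and the prefactors $(\lambda(\tau)/\lambda(\sigma))^{3/2}(\rho(\eta)/\rho(\xi))^{1/2}$ combine with the $\langle\xi\rangle^{\alpha+1/2}$ weight into an operator-norm bound of the form $\|\langle\xi\rangle^{\alpha+1/2}K(\tau,\sigma,\xi)f(\sigma,\cdot)\|_{L^2_{d\rho(\xi)}}\lesssim \tau\,(\sigma/\tau)^{M}\|f(\sigma,\cdot)\|_{L^{2,\alpha}_{d\rho}}$ for some finite exponent $M=M(\alpha,\nu)$. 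The key quantitative input is the pointwise inequality
\[
\Big|\xi^{-1/2}\sin\big(\lambda(\tau)\xi^{1/2}\int_\tau^\sigma\lambda^{-1}(u)\,du\big)\Big|\leq\min\Big(\xi^{-1/2},\,\lambda(\tau)\int_\tau^\sigma\lambda^{-1}\Big),
\]
and a direct computation with $\lambda(t)=t^{-1-\nu}$ yields $\lambda(\tau)\int_\tau^\sigma\lambda^{-1}(u)\,du=O(\tau)$ uniformly in $\sigma\geq\tau$; the high-frequency branch $\xi^{-1/2}$ exactly cancels the weight gain $\langle\xi\rangle^{1/2}$, while the low-frequency branch $\tau$ supplies the $\tau$ factor in the kernel bound. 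Inserting $\|f(\sigma,\cdot)\|_{L^{2,\alpha}_{d\rho}}\leq\sigma^{-N}\|\underline f\|$ and taking $N$ large enough that $\int_\tau^\infty(\sigma/\tau)^M\sigma^{-N}\,d\sigma\lesssim\tau^{-N+1}$ then produces $\tau^{-N+2}\|\underline f\|$, as required.

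For the discrete part, I would solve the implicit relation \eqref{eq:x_d} by contraction in the $\tau^{-N}$-weighted sup-norm. Since $\beta_\nu(\tau)=(1+\nu)/(\nu\tau)$ is small on $[\tau_0,\infty)$ for $\tau_0$ large, the map $Tx_d(\tau):=\int_\tau^\infty H_d(\tau,\sigma)(f_d(\sigma)-\beta_\nu(\sigma)\partial_\tau x_d(\sigma))\,d\sigma$ is a contraction, and the direct bound $\int_\tau^\infty|\xi_d|^{-1/2}e^{-|\xi_d|^{1/2}(\sigma-\tau)}\sigma^{-N}\,d\sigma\lesssim\tau^{-N}$ (obtained by splitting at $\sigma=2\tau$) yields the improved estimate $|x_d(\tau)|\lesssim\tau^{-N}\|\underline f\|$, which is stronger than what the statement demands. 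The main technical subtlety lies in the continuous-part bookkeeping: one must carefully verify that the various $(\tau/\sigma)$-powers arising from the $\lambda^{3/2}$ prefactor, the spectral measure ratio, the change-of-variables Jacobian, and the $\langle\xi\rangle^{\alpha+1/2}$ weight all combine into a kernel that is at worst polynomially bounded in $\sigma/\tau$ (absorbable by taking $N$ large), and that the low-frequency singularity of $\rho$ at $\xi=0$ is exactly compensated by the $\xi^{-1/2}$ in front of the sine so that the price for the weight gain $\langle\xi\rangle^{1/2}$ is precisely one factor of $\tau$.
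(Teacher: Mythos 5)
Your proposal is correct and takes essentially the same approach as the paper, whose proof consists solely of stating the key pointwise bound $|\xi^{-1/2}\sin[\lambda(\tau)\xi^{1/2}\int_\tau^\sigma\lambda^{-1}]|\lesssim\tau$ (sharpened to $\min(\xi^{-1/2},\tau)$ in your write-up) and leaving the Minkowski/change-of-variables bookkeeping and the contraction for the discrete mode to the reader. You have simply supplied those routine details.
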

The proof is a straightforward consequence of the fact that 
\[
\big|\xi^{-\frac{1}{2}}\sin\Big[\lambda(\tau)\xi^{\frac{1}{2}}\int_{\tau}^{\sigma}\lambda^{-1}(u)\,du\Big]\big|\lesssim \tau
\]

What we shall effectively prove now is the following 
\begin{prop}\label{prop:fixedpoint} Given $N$ sufficiently large, $0<\nu\leq \frac{1}{2}$, there is $k$ sufficiently large and there exists a function $\tilde{e}_{2k-1}\in H^{\frac{\nu}{2}-}$ such that $\tilde{e}_{2k-1}|_{r\leq t} = e_{2k-1}|_{r\leq t}$ and such that \eqref{eq:transport} admits a fixed point $x(\tau, \cdot)\in \tau^{-N} L^{2,\frac{1+\frac{\nu}{2}-}{2}}_{d\rho}$, $\mathcal{D}_{\tau}x(\tau, \cdot)\in \tau^{-N-1} L^{2,\frac{\frac{\nu}{2}-}{2}}_{d\rho}$. 
\end{prop}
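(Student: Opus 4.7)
\medskip

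\noindent\emph{Proof plan for Proposition~\ref{prop:fixedpoint}.} The strategy is to set up a contraction mapping using the explicit parametrix \eqref{eq:para}--\eqref{eq:x_d} for the operator $\mathcal{D}_\tau^2+\beta_\nu\mathcal{D}_\tau+\underline{\xi}$. Concretely, I would define the solution map $\Phi$ which, given an input $\underline{y}(\tau,\xi)$, produces the output $\underline{x}$ by plugging $\mathcal{R}(\tau,\underline{y})+f(\tau,\underline{y})$ into the right-hand side of \eqref{eq:para} (continuous part) and \eqref{eq:x_d} (discrete part). The fixed point will live in a norm of the form
\[
\|\underline{x}\|_{X}:=\sup_{\tau>\tau_0}\Bigl[\tau^{N}\|x(\tau,\cdot)\|_{L^{2,(1+\nu/2-)/2}_{d\rho}}+\tau^{N+1}\|\mathcal{D}_\tau x(\tau,\cdot)\|_{L^{2,(\nu/2-)/2}_{d\rho}}+\tau^{N}|x_d(\tau)|+\tau^{N+1}|\dot x_d(\tau)|\Bigr],
\]
and the transference contributions $\mathcal{R}$ will be made small either by a factor of $\beta_\nu(\tau)\simeq\tau^{-1}$ or by the mapping properties of $\mathcal{K}$ from~\cite{KST,DoKr}, combined with Lemma~\ref{lem:basicpara} (which, essentially for free, gains one spatial derivative at the cost of one power of $\tau$).

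The first nontrivial task is the source term. The raw error $e_{2k-1}|_{r\le t}$ is only guaranteed to be smooth inside the cone; however, by the proposition at the end of Section 2, $e_{2k-1}\in \mu^{-2k+1}\lambda^{1/2}\IS^0(\cdots,\mathcal{Q}')$. By Huyghens' principle, I can freely extend this past $r=t$, and the worst singularity in the $\mathcal{Q}'$ class is of the form $(1-a)^{\beta(1)}=(1-a)^{\nu/2-1/2}$, which after multiplication by the $R$ (to pass to $\tilde e_{2k-1}$) lies in $H^{\nu/2-}_{R^2 dR}$ uniformly in $\tau$. A suitable cutoff--extension then yields $\tilde e_{2k-1}\in H^{\nu/2-}$, globally on $\R^3$, with an overall $\tau^{-2k/\nu}$ gain; for $k$ large this is $O(\tau^{-N_0})$ for any prescribed $N_0$. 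Hence $\mathcal{F}(R\tilde e_{2k-1})$ is bounded in $L^{2,(\nu/2-)/2}_{d\rho}$ with the required temporal decay, feeding properly into Lemma~\ref{lem:basicpara}.

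The main obstacle, and where this paper's improvement over \cite{KST} lives, is the quintic term $RN(u_{2k-1},\tilde\eps)$. For $\nu$ small, the embedding $(H^{1+2\alpha})^5\hookrightarrow H^{2\alpha}$ fails at the Sobolev level used in \cite{KST}. To get around this I would analyze the \emph{first} iterate
\[
\tilde\eps^{(1)}=\Phi(0)
\]
and decompose it as $\tilde\eps^{(1)}=\tilde\eps^{(1)}_{\mathrm{sm}}+\tilde\eps^{(1)}_{\mathrm{rough}}$, where $\tilde\eps^{(1)}_{\mathrm{sm}}$ is obtained by replacing the distorted Fourier coefficient with a version in which the singular $\xi\ll 1$ piece responsible for the low regularity is removed, while $\tilde\eps^{(1)}_{\mathrm{rough}}$ carries the singular contribution. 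The key point, as stressed in the introduction, is that the loss of regularity in $e_{2k-1}$ is concentrated at $a=1$ (the light cone), so after inversion by the parametrix the rough part enjoys a good pointwise bound $|\tilde\eps^{(1)}_{\mathrm{rough}}/R|\lesssim \tau^{-N}$ uniformly in a neighborhood of $R=0$, while it is controlled in $H^{1+\nu/2-}$ globally; the smooth part admits the full $H^{1+2\alpha}$ bound with $2\alpha>1/4$. Splitting each factor $\tilde\eps$ in the quintic accordingly and expanding, the dangerous products are multiplied either by $W^4$ (smooth, decaying) or by one of the smooth factors, and one can apply the Sobolev/product estimates of \cite{KST} to the smooth piece and pure $L^\infty$ bounds to the rough piece. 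This replaces the bad embedding by a combination of two good ones.

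With the source and nonlinearity handled, the remaining step is the contraction. Since $\mathcal{R}$ gains a factor $\beta_\nu(\tau)=O(\tau^{-1})$, since Lemma~\ref{lem:basicpara} converts $N$ into $N-2$ (so we work with $N$ large enough to afford it), and since the quintic terms are at worst linear in the rough/smooth splitting with small prefactors (powers of $\tau^{-1}$ from the extra $\mu^{-2k}$ gain in $u_{2k-1}-u_0$ and from the $\lambda^{-2}$ in \eqref{eq:Rtauwave1}), the map $\Phi$ is a contraction on a small ball in $X$ provided $k$ and then $\tau_0$ are chosen large enough. The unique fixed point is then $\tilde\eps$ with the claimed regularity; undoing the change of variables gives $\eps(t,r)$ of Proposition~\ref{prop:key}. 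I expect the subtlest bookkeeping to be in the decomposition of $\tilde\eps^{(1)}$ and its propagation through the iteration --- one must verify that the rough/smooth splitting is stable under $\Phi$, i.e., that applying $\Phi$ to a function of this form again produces a function of this form with comparable bounds.
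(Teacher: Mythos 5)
Your high-level narrative — apply the parametrix \eqref{eq:para}, isolate the first iterate and split it into a smooth piece and a rough piece with a good pointwise bound near $R=0$, then feed the $L^\infty$ control into the quintic to avoid the failing Sobolev embedding — is the right one and matches the paper's strategy. But there are two concrete places where your proposal diverges from, or would run into trouble compared with, the paper's actual argument.

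First, you locate the source of the low regularity at small frequency: you propose to split off ``the singular $\xi\ll 1$ piece responsible for the low regularity.'' This is backwards. In the paper's Lemma~\ref{lem:key} the small-$\xi$ analysis is dispatched in one line (``proceeds exactly as in~\cite{KST}''); the entire difficulty, and the whole content of the decomposition $x=x^{(1)}+x^{(2)}$, is at $\xi\gg 1$. The singularity $(1-a)^{(\nu-1)/2}$ of $\tilde e_{2k-1}$ at the light cone gives a Fourier transform decaying only like $\xi^{-3/4+}$ at high frequency, and after the parametrix and the non-oscillatory replacement $\cos^2\to\f12$, $\sin^2\to\f12$, one is left with the explicit expression \eqref{eq:BadGuy} in which $x^{(2)}$ has the precise phase $e^{\pm i\nu\tau\xi^{1/2}}$ multiplied by a slowly varying amplitude of size $\xi^{-5/4+}$. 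It is exactly this explicit oscillation — combined with the restriction $R<\nu\tau/2$, which makes the phase $(\pm R\pm\nu\tau)\xi^{1/2}$ non-degenerate — that produces the $L^\infty_{dR}$ bound for $\tilde\eps^{(2)}/R$ via integration by parts in $\xi^{1/2}$. Your proposal does not identify this mechanism, and a low-frequency cutoff would not recover it.

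Second, you worry that ``one must verify that the rough/smooth splitting is stable under $\Phi$.'' The paper avoids this issue entirely: the splitting is applied only to the \emph{first} iterate. Lemmas~\ref{lem:eps_2} and~\ref{lem:eps_k} show that the increments $x_j-x_{j-1}$ for $j\ge 2$ land in the stronger space $\tau^{-N}L^{2,1}_{d\rho}$ (i.e., $H^2$), not merely in $\tau^{-N}L^{2,(1+\nu/2-)/2}_{d\rho}$. The reason is that once the first iterate's $L^\infty$ and $L^M$ bounds near the origin are in hand, the terms $\lambda^{-2}[5(u_{2k-1}^4-u_0^4)\tilde\eps_1+RN(u_{2k-1},\tilde\eps_1)]$ can be estimated directly in $H^1_{dR}$, and the parametrix then gains the full extra derivative. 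Consequently the series $\sum_j(x_j-x_{j-1})$ converges geometrically in $L^{2,1}_{d\rho}$ and the limit is the first iterate plus an $H^2$ correction. Setting up a contraction directly in a ball of $L^{2,(1+\nu/2-)/2}$ while trying to carry the rough/smooth decomposition through every iterate — as you propose — would be considerably harder to close, and is unnecessary.

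So: right skeleton, but the frequency regime of the rough part is inverted, the oscillatory-phase mechanism behind the $L^\infty$ bound is not identified, and the iteration scheme should be a one-time split of the first iterate followed by an $H^2$ gain for all subsequent increments rather than a self-propagating decomposition.
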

\begin{proof}[Proof of Proposition~\ref{prop:key}, assuming Proposition~\ref{prop:fixedpoint}] This is now a consequence of the fact that $\mathcal{F}$ is an isometry from $H^{2\alpha}_{dR}$ to $L^{2, \alpha}_{d\rho}$.  The discrepancy between $2\alpha$ on the one hand, and $\alpha$ on the other hand is due to the frequency being $\xi^{\f12}$ rather than~$\xi$). 
\end{proof}

\begin{proof}[Proof of Proposition~\ref{prop:fixedpoint}]
We shall solve \eqref{eq:transport} via an iterative scheme, namely 
\begin{equation}\label{eq:iterstep}
\big(\mathcal{D}_{\tau}^2 + \beta_{\nu}(\tau)\mathcal{D}_{\tau} + \underline{\xi}\big)\underline{x}_j(\tau, \xi) = \calR(\tau, \underline{x}_{j-1}) + f_{j-1}(\tau, \underline{\xi}),
\end{equation}
with 
\begin{equation}\label{eq:R_j-1}
\calR(\tau, \underline{x}_{j-1}) = -4\beta_{\nu}(\tau)\mathcal{K}\mathcal{D}_{\tau}\underline{x}_{j-1} - \beta_{\nu}^2(\tau)\Big(\mathcal{K}^2 + [\mathcal{A}, \mathcal{K}] + \mathcal{K} + \frac{\beta_{\nu}'}{\beta_{\nu}^2}\mathcal{K}\Big)\underline{x}_{j-1}
\end{equation}
and 
\begin{equation}\label{eq:f_j-1}
 f_{j-1}(\tau, \xi) = \mathcal{F}\big( \lambda^{-2}(\tau)\big[5(u_{2k-1}^4 - u_0^4)\tilde{\eps}_{j-1} + RN(u_{2k-1}, \tilde{\eps}_{j-1}) + R \tilde{e}_{2k-1}\big]\big)\big(\xi\big)
\end{equation}
and we of course set 
\[
\tilde{\eps}_j(\tau,R) = x_{d,j}(\tau)\phi_d(R) + \int_0^\infty x_{j}(\tau, \xi)\phi(R, \xi)\rho(\xi)\,d\xi,\,j\geq 1,
\]
while we also set $\tilde{\eps}_0 = \underline{x}_0 = 0$. In particular, $f_0 = \mathcal{F}\big(R \tilde{e}_{2k-1}\big)$, $\calR(\tau, \underline{x}_{0}) = 0$. 
This underlines the importance of the {\it{first iterate}} $\tilde{\eps}_1$ since it will determine the smoothness of subsequent iterates due to the smoothing properties of the parametrix. We next turn to a careful analysis of the first iterate. 

\section{The first iterate} In light of the fact that $t^2 e_{2k-1}\in \frac{\lambda^{\frac{1}{2}}}{(\lambda t)^{2k}}IS^0(R^{1+}, \calQ')$ and the precise definition of this space from before, it is clear that $e_{2k-1}$ is $C^\infty$-smooth except at the light cone $r = t$. Furthermore, by subtracting functions of regularity at least $H^1_{dR}$ and choosing $k$ large enough, we may replace $e_{2k-1}$ by an expression of the form\footnote{Recall that we may modify $e_{2k-1}$ arbitrarily outside the light cone.} 
\begin{equation}\label{eq:enemy}
t^2e_{2k-1} = C(\tau)\tau^{-N-2}(\log(1-a))^i(1-a)^{\frac{\nu-1}{2}},\,C(\tau) = O(1),
\end{equation}
for a finite collection of indices $i$. More precisely, we can write 
\[
t^2e_{2k-1} = \sum_i C_i(\tau)\tau^{-N-2}(\log(1-a))^i(1-a)^{\frac{\nu-1}{2}} + E_{2k-1}
\]
with $E_{2k-1}\in \tau^{-N}H^1_{R^2 dR}$. 

The principal issue now becomes what the effect of the parametrix \eqref{eq:para} on this kind of expression is. Then the key to proving Proposition~\ref{prop:fixedpoint} is the following result. 

\begin{lem}\label{lem:key}Define $\tilde{e}_{2k-1}$ to be the function obtained from $$C\tau^{-N-2}(\log(1-a))^i(1-a)^{\frac{\nu-1}{2}}|_{r\leq t}$$ by truncation to $r\leq t$. Then the function $x(\tau, \xi)$ defined via \eqref{eq:para} with 
\[
f(\tau, \xi) = \lambda(\tau)^{-2}\mathcal{F}(R\tilde{e}_{2k-1}(\tau, \cdot))(\xi)
\]
satisfies $x = x^{(1)} + x^{(2)}$, where 
\[
x^{(1)}(\tau, \cdot)\in \tau^{-N}L^{2,\frac{3}{4}-}_{d\rho}, \;\; x^{(2)}\in \tau^{-N}L^{2,\frac{1+\frac{\nu}{2}-}{2}}_{d\rho},\;\; \frac{1}{R}\chi_{[R<\frac{\nu\tau}{2}]}\mathcal{F}^{-1}(x^{(2)})\in \tau^{-N}L^\infty_{dR}
\]
provided $k$ is sufficiently large. Finally, we also get the bound 
\[
\mathcal{D}_{\tau}x \in \tau^{-N-1}L^{2,\frac{\frac{\nu}{2}-}{2}}_{d\rho},\;\;|x_d(\tau)|\lesssim \tau^{-N-1}
\]
where $x_d$ is defined via \eqref{eq:x_d}, with $f$ replaced by $R\tilde{e}_{2k-1}$. 
\end{lem}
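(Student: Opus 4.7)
The plan is to apply the explicit parametrix \eqref{eq:para} to the specific singular source $R\tilde e_{2k-1}$ and split the output into a smoother part and a rough remainder that is nonetheless pointwise well-behaved strictly inside the light cone. The key geometric observation is that in the new variables $\mu(t)=t\lambda(t)=\nu\tau$, so the singularity $(1-a)^{(\nu-1)/2}$ of $\tilde e_{2k-1}$ lives exactly at $R=\nu\tau$, on the outer boundary of the source's spatial support, while the region $R<\nu\tau/2$ where we want the $L^\infty$-bound is bounded strictly away from this singular set. The hoped-for improvement on the rough piece reflects this spatial separation.

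First I would analyze $g(\tau,\xi):=\mathcal F(R\tilde e_{2k-1}(\tau,\cdot))(\xi)$ using the asymptotic expansions of $\phi(R,\xi)$ from~\cite{KST} together with the isometry $\mathcal F:H^{2\alpha}_{R^2 dR}\to L^{2,\alpha}_{d\rho}$. Scaling $R=\nu\tau\,a$, the function $R\tilde e_{2k-1}$ has $H^{\nu/2-}_{dR}$-regularity dictated by the $(1-a)^{(\nu-1)/2}$ singularity, so $g(\tau,\cdot)\in \tau^{-N-1}L^{2,\nu/4-}_{d\rho}$. I would then split $g=g^{(1)}+g^{(2)}$, where $g^{(2)}$ captures the high-frequency tail carrying the one-sided power singularity (with principal symbol $\sim\xi^{-(1+\nu)/4}(\log\xi)^i$ produced by an Abel/stationary-phase integration by parts against $(1-a)^{(\nu-1)/2}$ in the oscillatory regime $R\xi^{1/2}\gtrsim 1$), while $g^{(1)}$ is a smoother remainder with an additional $\xi^{-1/2}$-decay coming from one further integration by parts.

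Applying \eqref{eq:para} to each piece, the kernel provides a uniform $\xi^{-1/2}$-gain at high $\xi$ from the prefactor $\xi^{-1/2}\sin[\lambda(\tau)\xi^{1/2}\omega]$, while the dilation $\xi\mapsto\lambda^2(\tau)\lambda^{-2}(\sigma)\xi$ and the $\rho$-ratio are bounded symbols for $\sigma\ge\tau$ and the $\sigma^{-N}$-decay of $f$ makes the $\sigma$-integral converge. This produces $x^{(2)}\in \tau^{-N}L^{2,(1+\nu/2-)/2}_{d\rho}$ and $x^{(1)}\in \tau^{-N}L^{2,3/4-}_{d\rho}$. Differentiating \eqref{eq:para} in $\tau$: $\partial_\tau$ applied to the phase brings down $\lambda(\tau)\xi^{1/2}\sim\tau\xi^{1/2}$, the other differentiations being of lower order, hence $\mathcal D_\tau x\in\tau^{-N-1}L^{2,\nu/4-}_{d\rho}$. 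The bound $|x_d(\tau)|\lesssim \tau^{-N-1}$ follows immediately from \eqref{eq:x_d}: $|f_d(\sigma)|=|\langle R\tilde e_{2k-1},\phi_d\rangle|\lesssim \sigma^{-N-2}$ since $\phi_d$ is exponentially localized near $R=0$ and therefore insensitive to the distant light-cone singularity, and the exponential kernel $H_d$ then converts this into the stated decay.

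The main obstacle is the pointwise bound $R^{-1}\chi_{[R<\nu\tau/2]}\mathcal F^{-1}(x^{(2)})\in\tau^{-N}L^\infty_{dR}$, which exploits the spatial separation. I would unfold the parametrix and the inverse distorted Fourier transform into a single physical-side kernel,
\[
\mathcal F^{-1}(x^{(2)})(R)=\int_\tau^\infty\!\!\int_0^{\nu\sigma} K(\tau,\sigma;R,R')\,\tilde e_{2k-1}(\sigma,R')\,R'\,dR'\,d\sigma,
\]
where $K$ combines the eigenfunction completeness relation with \eqref{eq:para} after inserting the cutoff that defines $g^{(2)}$. For $R<\nu\tau/2$ and $R'$ in the singular range $[\nu\tau/2,\nu\sigma]$, the combined phase arising from $\sin[\lambda(\tau)\xi^{1/2}\int_\tau^\sigma\lambda^{-1}]$ and the oscillatory high-frequency asymptotic of $\phi(R,\xi)$ has the form $\pm(R-R')\xi^{1/2}+\lambda(\tau)\xi^{1/2}\int_\tau^\sigma\lambda^{-1}(u)\,du$, whose derivative in $\xi^{1/2}$ is bounded below by $|R'-R|\gtrsim \nu\tau/2$ when $\sigma\sim\tau$ (and when $\sigma\gg\tau$ the additional self-similar term itself yields the needed decay in $\sigma$). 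Repeated integration by parts in $\xi^{1/2}$ then produces a pointwise kernel bound $|K(\tau,\sigma;R,R')/R|\lesssim (R'-R)^{-M}$ for arbitrary fixed $M$, which absorbs the $L^1_{R'dR'}$-singularity $(1-R'/(\nu\sigma))^{(\nu-1)/2}$, integrable on $[0,\nu\sigma]$ because $\nu>0$. The $\sigma^{-N-2}$-decay then makes the $\sigma$-integral produce the $\tau^{-N}$-factor uniformly in $R$. The division by $R$ is harmless because $\phi(R,\xi)$ vanishes linearly at $R=0$ and is uniformly bounded in the oscillatory regime $R\xi^{1/2}\ge 1$ where $g^{(2)}$ is concentrated.
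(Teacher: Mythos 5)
Your overall plan tracks the paper's: analyze $\mathcal{F}(R\tilde e_{2k-1})$ via the high-frequency asymptotics of $\phi(R,\xi)$, apply the parametrix, and exploit the spatial separation $R<\nu\tau/2$ versus the singularity at $R'=\nu\sigma$ for the $L^\infty$ bound. However, the crucial $L^\infty$ argument has a genuine gap. Your physical-side kernel $K(\tau,\sigma;R,R')$ has phase
\[
\pm R\xi^{\frac12}\;\pm\; R'\,\tfrac{\lambda(\tau)}{\lambda(\sigma)}\,\xi^{\frac12}\;\pm\;\nu\tau\bigl(1-(\tau/\sigma)^{1/\nu}\bigr)\xi^{\frac12},
\]
not $\pm(R-R')\xi^{1/2}+\lambda(\tau)\xi^{1/2}\int_\tau^\sigma\lambda^{-1}$; the $\lambda(\tau)/\lambda(\sigma)$ rescaling of the $R'$-phase is essential because the parametrix evaluates the source at the dilated frequency $\frac{\lambda^2(\tau)}{\lambda^2(\sigma)}\xi$. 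Once this is corrected, your claimed phase-derivative lower bound fails: at $R'\simeq\nu\sigma$ (which is exactly where the source is singular), the two $\sigma$-dependent terms become $\nu\tau(\tau/\sigma)^{1/\nu}$ and $\nu\tau(1-(\tau/\sigma)^{1/\nu})$, and when their signs are opposite the combined phase derivative is $\pm R+\nu\tau\bigl(2(\tau/\sigma)^{1/\nu}-1\bigr)$, which vanishes along a curve in $(\sigma,\xi)$. So the kernel does have a stationary-phase contribution and repeated integration by parts in $\xi^{1/2}$ does not give arbitrary decay. This stationary contribution is not an artifact: it is precisely the piece that forces the lemma to keep only $L^{2,\frac{1+\nu/2-}{2}}$ regularity for $x^{(2)}$.

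The paper resolves this by performing the split after the $\sigma$-integral rather than on the source $g$. Expanding $\sin\bigl[\nu\tau\xi^{1/2}-\nu\tfrac{\tau^{1+1/\nu}}{\sigma^{1/\nu}}\xi^{1/2}\bigr]\cos^\kappa\bigl(\nu\tfrac{\tau^{1+1/\nu}}{\sigma^{1/\nu}}\xi^{1/2}\bigr)$ isolates a $\sigma$-DC term (the $\cos^2,\sin^2\to\tfrac12$ replacement) whose only remaining $\xi$-oscillation is the $\sigma$-independent factor $\sin(\nu\tau\xi^{1/2})$ or $\cos(\nu\tau\xi^{1/2})$; this DC term is exactly $x^{(2)}$ in \eqref{eq:BadGuy}. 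Only after that reduction is the $\xi^{1/2}$-integration by parts performed, and then the phase $\pm R\xi^{1/2}\pm\nu\tau\xi^{1/2}$ really is non-stationary for $R<\nu\tau/2$, which is why the $L^\infty$ bound holds. The oscillatory-in-$\sigma$ remainder is dispatched separately by integrating by parts in $\sigma$, gaining $\xi^{-1/2}$ and landing in $L^{2,1-}_{d\rho}$ inside $x^{(1)}$. Your decomposition $g=g^{(1)}+g^{(2)}$ of the source by $\xi$-frequency cannot see this structure, because both $x^{(1)}$ and $x^{(2)}$ arise from the same leading piece of $g$ and are distinguished only by their behavior under the $\sigma$-integral. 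To repair your argument you would need to carry out the $\sigma$-stationary-phase analysis explicitly, at which point you would essentially be reproducing the paper's split.
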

\begin{cor}\label{cor:firstiter} Modify $e_{2k-1}$ by restricting the terms
\[
C_i(\tau)\tau^{-N-2}(\log(1-a))^i(1-a)^{\frac{\nu-1}{2}}
\]
to $r<t$ while smoothly truncating  $E_{2k-1}$ to a dilate of the light cone. Then the same bounds as in the preceding lemma apply to $\underline{x}$, the true first iterate.
\end{cor}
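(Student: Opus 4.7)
The plan is to derive Corollary~\ref{cor:firstiter} as an essentially linear consequence of Lemma~\ref{lem:key}, together with a soft Plancherel estimate for the $H^1_{R^2dR}$-regular tail of the source. First I would record the decomposition
\[
t^2 e_{2k-1}(\tau,R) = \sum_{i\in I} C_i(\tau)\,\tau^{-N-2}(\log(1-a))^i(1-a)^{(\nu-1)/2}\chi_{[r\le t]} + \widetilde{E}_{2k-1}(\tau,R),
\]
where $I$ is finite, $C_i(\tau) = O(1)$, and $\widetilde{E}_{2k-1}\in \tau^{-N}H^1_{R^2dR}$ is the remainder $E_{2k-1}$ after a smooth truncation to a fixed dilate of the light cone. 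Such a cutoff preserves the $H^1_{R^2dR}$-norm up to an absolute constant, and modifying the source outside the light cone has already been justified by the Huyghens principle discussion preceding Proposition~\ref{prop:fixedpoint}.

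Next I would exploit the linearity of the parametrix \eqref{eq:para} and of the discrete-eigenvalue relation \eqref{eq:x_d}. Writing $\underline{x} = \sum_{i\in I}\underline{x}_i + \underline{x}_E$ with $\underline{x}_i$ the first iterate produced by the $i$-th singular summand and $\underline{x}_E$ that produced by $\widetilde{E}_{2k-1}$, I apply Lemma~\ref{lem:key} to each $i\in I$ to obtain decompositions $x_i = x_i^{(1)} + x_i^{(2)}$ with
\[
x_i^{(1)}\in \tau^{-N}L^{2,3/4-}_{d\rho},\quad x_i^{(2)}\in \tau^{-N}L^{2,(1+\nu/2-)/2}_{d\rho},\quad \tfrac{1}{R}\chi_{[R<\nu\tau/2]}\mathcal{F}^{-1}(x_i^{(2)})\in \tau^{-N}L^\infty_{dR},
\]
as well as $\mathcal{D}_\tau x_i\in \tau^{-N-1}L^{2,(\nu/2-)/2}_{d\rho}$ and $|x_{i,d}(\tau)|\lesssim \tau^{-N-1}$. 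Summing over the finitely many indices, the singular-source piece inherits exactly the bounds required.

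It remains to absorb the regular contribution $\underline{x}_E$ into $x^{(1)}$. Since $\widetilde{E}_{2k-1}$ has an even expansion at $R=0$, the odd extension of $R\widetilde{E}_{2k-1}$ lies in $\tau^{-N}H^1_{dR}$, so the isometry $\mathcal{F}\colon H^{2\alpha}_{dR}\to L^{2,\alpha}_{d\rho}$ with $2\alpha = 1$ produces $\mathcal{F}(R\widetilde{E}_{2k-1})\in \tau^{-N}L^{2,1/2}_{d\rho}$. Accounting for the $\lambda^{-2}(\tau)$ factor in the definition of the source and applying Lemma~\ref{lem:basicpara}, the parametrix output satisfies $x_E\in \tau^{-N'}L^{2,1}_{d\rho}$ and $|x_{E,d}(\tau)|\lesssim \tau^{-N'}$ with $N'$ as large as desired by choosing $k$ (and hence $N$) sufficiently large. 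Since the regularity index $1$ is strictly stronger than $3/4-$, the piece $x_E$ may be folded into $x^{(1)}$ without loss; the bound on $\mathcal{D}_\tau x_E$ follows by differentiating \eqref{eq:para} and estimating term by term in the same way.

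The main obstacle is nothing deep but rather the careful bookkeeping of the $\lambda$-weights and $\tau$-powers, so that the Plancherel-plus-parametrix estimate for $\widetilde{E}_{2k-1}$ lines up with the $\tau^{-N}$-normalization shared by the singular summands. All the substantive analytic content already lives inside Lemma~\ref{lem:key}; the corollary is essentially an addition exercise on top of finitely many invocations of that lemma, combined with one soft $L^2$-based estimate for the $H^1$-regular remainder.
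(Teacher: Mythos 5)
Your proof is correct and follows the same route as the paper, whose entire proof of the corollary reads ``This follows from Lemma~\ref{lem:key} as well as Lemma~\ref{lem:basicpara} applied to $E_{2k-1}$.'' Your elaboration — summing finitely many invocations of Lemma~\ref{lem:key} by linearity, then using the Plancherel isometry together with Lemma~\ref{lem:basicpara} (and the extra $\lambda^{-2}(\tau)\sim\tau^{-2(1+\nu)/\nu}$ decay) to fold the $H^1_{R^2dR}$-regular remainder into $x^{(1)}$ — accurately fills in the steps the paper leaves implicit.
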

\begin{proof}[Proof of Corollary~\ref{cor:firstiter}] This follows from Lemma~\ref{lem:key} as well as Lemma~\ref{lem:basicpara} applied to $E_{2k-1}$. 
\end{proof}
\begin{proof}[Proof of Lemma~\ref{lem:key}] The proof of the bounds for small $\xi$ proceeds exactly as in \cite{KST}, and so we shall now focus on $\xi\gg 1$. The expression $\mathcal{F}(R\tilde{e}_{2k-1}(\tau, \cdot))(\xi)$ is given by 
\[
 \mathcal{F}(R\tilde{e}_{2k-1}(\tau, \cdot))(\xi) = \int_0^\infty \phi(R, \xi)R\tilde{e}_{2k-1}(\tau, R)\, dR
 \]
 and here we can restrict the integration to $0\leq R\leq \nu\tau$. We shall next use the precise asymptotic expansion for $\phi(R, \xi)$. Specifically, for $R\xi^{\frac{1}{2}}\gtrsim 1$, we get 
 \EQ{\label{phif+}
 \phi(R, \xi) = a(\xi)f_{+}(R, \xi) + \overline{a(\xi)}f_-(R, \xi),\quad f_-(R, \xi) = \overline{f_+(R, \xi)}
 }
 where we have the  expansions  in Hankel functions
 \begin{align*}
 f_{+}(R, \xi) &= e^{iR\xi^{\frac{1}{2}}}\sigma(R\xi^{\frac{1}{2}}, \xi),\\
 \sigma(q, \xi) &= \sum_{j=0}^\infty q^{-j}\psi_{j}^+(R)
 \end{align*}
and $\psi_j^+$ uniformly bounded in $R$ with $\psi_j^+(R) = O(R^j)$. In particular, we get 
\[
f_+(R, \xi) = e^{iR\xi^{\frac{1}{2}}} + O(\xi^{-\frac{1}{2}})
\]
for large $\xi$ and $R\xi^{\frac{1}{2}}\gtrsim 1$. 
\\
Another key ingredient is the precise asymptotic formula for the coefficients $a(\xi)$. In fact, from \cite{DoKr}, Cor. 3.7, we have 
\[
a(\xi) = \frac{\xi^{-\frac{1}{2}}}{2i}(1+ O(\xi^{-\frac{1}{2}})),\;\; \xi\gg 1
\]
Neglecting for now all terms of the form $O(\xi^{-\frac{1}{2}})$, we are then lead to the following integral:
\begin{align*}
& \mathcal{F}(R\tilde{e}_{2k-1}(\sigma, \cdot))(\xi)\\& = \frac{\xi^{-\frac{1}{2}}}{2i}\int_0^{\nu\sigma} e^{iR\xi^{\frac{1}{2}}}R\tilde{e}_{2k-1}(\sigma, R)\,dR - \frac{\xi^{-\frac{1}{2}}}{2i}\int_0^{\nu\sigma} e^{-iR\xi^{\frac{1}{2}}}R\tilde{e}_{2k-1}(\sigma, R)\,dR\\
 &=\frac{e^{i\nu\sigma\xi^{\frac{1}{2}}}\xi^{-\frac{1}{2}}}{2i}\int_0^{\nu\sigma} e^{i(R-\nu\sigma)\xi^{\frac{1}{2}}}R\tilde{e}_{2k-1}(\sigma, R)\,dR\\& - \frac{e^{-i\nu\sigma\xi^{\frac{1}{2}}}\xi^{-\frac{1}{2}}}{2i}\int_0^{\nu\sigma} e^{-i(R-\nu\sigma)\xi^{\frac{1}{2}}}R\tilde{e}_{2k-1}(\sigma, R)\,dR\\
\end{align*}
Recalling the definition of $\tilde{e}_{2k-1}$ and replacing the outer factor $R$ by $\nu\sigma$ up to an error of type $E_{2k-1}$, the preceding is seen to be equal to a linear combination of terms of the form 
\begin{align*}
&(\log \sigma)^{N_2}\sigma^{-N_1}\cos(\nu\sigma\xi^{\frac{1}{2}})\xi^{-\frac{1}{2}}\int_0^{\nu\sigma}\frac{(\log x)^i\sin(\xi^{\frac{1}{2}} x)}{x^{\frac{1-\nu}{2}}}\,dx\\
&= (\log \sigma)^{N_2}\sigma^{-N_1}\xi^{-\frac{3}{4+}}\cos(\nu\sigma\xi^{\frac{1}{2}})C(\xi)\\
&+\sum_{0\leq j\leq i}(\log \sigma)^{N_2+j}\sigma^{-N_1-\frac{1-\nu}{2}}\xi^{-1-}\cos^2(\nu\sigma\xi^{\frac{1}{2}})C_j(\xi)+O(\sigma^{-N_1-\frac{3-\nu}{2}}\xi^{-\frac{5}{4+}}),\\& N_1\geq N+1,\\
&(\log \sigma)^{N_2}\sigma^{-N_1}\sin(\nu\sigma\xi^{\frac{1}{2}})\xi^{-\frac{1}{2}}\int_0^{\nu\sigma}\frac{(\log x)^i\cos(\xi^{\frac{1}{2}} x)}{x^{\frac{1-\nu}{2}}}\,dx\\
&=(\log \sigma)^{N_2}\sigma^{-N_1}\xi^{-\frac{3}{4+}}\sin(\nu\sigma\xi^{\frac{1}{2}})C(\xi)\\
&+\sum_{0\leq j\leq i}(\log \sigma)^{N_2+j}\sigma^{-N_1-\frac{1-\nu}{2}}\xi^{-1-}\sin^2(\nu\sigma\xi^{\frac{1}{2}})C_j(\xi)+O(\sigma^{-N_1-\frac{3-\nu}{2}}\xi^{-\frac{5}{4+}}),\\& N_1\geq N+1,
\end{align*}
for some uniformly bounded functions $C(\xi), C_j(\xi)$ with symbol behavior. Note that the errors $O(\sigma^{-N_1-\frac{3-\nu}{2}}\xi^{-\frac{5}{4+}})$ are in $\sigma^{-N-2}L^{2, \frac{1}{2}}_{d\rho}$, which corresponds to $H^1_{R^2 dR}$ as for the error terms $E_{2k-1}$ introduced further above. Now apply the parametrix \eqref{eq:para} to the first two terms in the above two expansions, multiplied by $\lambda(\sigma)^{-2}$. Observe that 
\[
\lambda(\tau)\xi^{\frac{1}{2}}\int_{\tau}^{\sigma}\lambda^{-1}(u)\,du = \nu\tau(1-(\frac{\tau}{\sigma})^{\frac{1}{\nu}})\xi^{\frac{1}{2}}
\]
and so we obtain the respective contributions
\begin{align*}
&x(\tau, \xi) =
 \xi^{-\frac{5}{4+}}\int_{\tau}^\infty \frac{\lambda^{0+}(\tau)}{\lambda^{0+}(\sigma)}C(\frac{\lambda^{2}(\tau)}{\lambda^{2}(\sigma)}\xi)\frac{\rho^{\frac{1}{2}}(\frac{\lambda^{2}(\tau)}{\lambda^{2}(\sigma)}\xi)}{\rho^{\frac{1}{2}}(\xi)}\sin\big[\nu\tau(1-(\frac{\tau}{\sigma})^{\frac{1}{\nu}})\xi^{\frac{1}{2}}\big]\\&\hspace{5cm}(\log\sigma)^{N_2}\sigma^{-N_1}\cos^\kappa\big(\nu\frac{\tau^{1+\nu^{-1}}}{\sigma^{\nu^{-1}}}\xi^{\frac{1}{2}}\big)\,d\sigma,\,\kappa = 1,2,
\end{align*}
as well as 
\begin{align*}
&x(\tau, \xi) =
 \xi^{-\frac{5}{4+}}\int_{\tau}^\infty \frac{\lambda^{0+}(\tau)}{\lambda^{0+}(\sigma)}C(\frac{\lambda^{2}(\tau)}{\lambda^{2}(\sigma)}\xi)\frac{\rho^{\frac{1}{2}}(\frac{\lambda^{2}(\tau)}{\lambda^{2}(\sigma)}\xi)}{\rho^{\frac{1}{2}}(\xi)}\sin\big[\nu\tau(1-(\frac{\tau}{\sigma})^{\frac{1}{\nu}})\xi^{\frac{1}{2}}\big]\\&\hspace{5cm}(\log\sigma)^{N_2}\sigma^{-N_1}\sin^\kappa\big(\nu\frac{\tau^{1+\nu^{-1}}}{\sigma^{\nu^{-1}}}\xi^{\frac{1}{2}}\big)\,d\sigma,\,\kappa = 1,2,
\end{align*}
where now $N_1\geq N+ 1 + 2(\frac{\nu+1}{\nu})$. 
Expanding the sine function in the integrand, we encounter in the case $\kappa = 1$ the problematic terms
\begin{align*}
\sin(\nu\tau\xi^{\frac{1}{2}})\xi^{-\frac{5}{4+}}\int_{\tau}^\infty \frac{\lambda^{0+}(\tau)}{\lambda^{0+}(\sigma)}C(\frac{\lambda^{2}(\tau)}{\lambda^{2}(\sigma)}\xi)\frac{\rho^{\frac{1}{2}}(\frac{\lambda^{2}(\tau)}{\lambda^{2}(\sigma)}\xi)}{\rho^{\frac{1}{2}}(\xi)}\frac{(\log\sigma)^{N_2}}{\sigma^{N_1}}\cos^2\big(\nu\frac{\tau^{1+\nu^{-1}}}{\sigma^{\nu^{-1}}}\xi^{\frac{1}{2}}\big)\,d\sigma
\end{align*}
\begin{align*}
\cos(\nu\tau\xi^{\frac{1}{2}})\xi^{-\frac{5}{4+}}\int_{\tau}^\infty \frac{\lambda^{0+}(\tau)}{\lambda^{0+}(\sigma)}C(\frac{\lambda^{2}(\tau)}{\lambda^{2}(\sigma)}\xi)\frac{\rho^{\frac{1}{2}}(\frac{\lambda^{2}(\tau)}{\lambda^{2}(\sigma)}\xi)}{\rho^{\frac{1}{2}}(\xi)}\frac{(\log\sigma)^{N_2}}{\sigma^{N_1}}\sin^2\big(\nu\frac{\tau^{1+\nu^{-1}}}{\sigma^{\nu^{-1}}}\xi^{\frac{1}{2}}\big)\,d\sigma
\end{align*}
where the bad\footnote{in the sense of non-oscillatory} terms $\cos^2(\nu\frac{\tau^{1+\nu^{-1}}}{\sigma^{\nu^{-1}}}\xi^{\frac{1}{2}}), \sin^2(\nu\frac{\tau^{1+\nu^{-1}}}{\sigma^{\nu^{-1}}}\xi^{\frac{1}{2}})$ may be replaced up to an oscillating term by $\frac{1}{2}$, in which case the preceding further simplifies to 
\begin{equation}\label{eq:BadGuy}
x^{(2)}(\tau, \xi) = \sin(\nu\tau\xi^{\frac{1}{2}})\xi^{-\frac{5}{4+}}\int_{\tau}^\infty \frac{\lambda^{0+}(\tau)}{\lambda^{0+}(\sigma)}\frac{(\log\sigma)^{N_2}}{\sigma^{N_1}}C(\frac{\lambda^{2}(\tau)}{\lambda^{2}(\sigma)}\xi)\frac{\rho^{\frac{1}{2}}(\frac{\lambda^{2}(\tau)}{\lambda^{2}(\sigma)}\xi)}{\rho^{\frac{1}{2}}(\xi)}\,d\sigma
\end{equation}
as well as a similar term with $ \cos(\nu\tau\xi^{\frac{1}{2}})$ in front. 
Here we have not gained the necessary decay for the Fourier coefficient $x$. On the other hand, this term has a well-defined oscillatory behavior in terms of $\xi^{\frac{1}{2}}$. 
Thus for $R<\frac{\nu\tau}{2}$, we get 
\begin{align*}
\int_0^\infty \phi(R, \xi)x^{(2)}(\tau, \xi) \rho(\xi)\,d\xi& = \int_0^\infty \chi_{[R\xi^{\frac{1}{2}}\less 1]}\phi(R, \xi)x^{(2)}(\tau, \xi) \rho(\xi)\,d\xi\\
&+ \int_0^\infty \chi_{[R\xi^{\frac{1}{2}}\gtrsim 1]}\phi(R, \xi)x^{(2)}(\tau, \xi) \rho(\xi)\,d\xi\\
\end{align*}
In the first integral we have the implicit phase functions $e^{\pm i\nu\tau\xi^{\frac{1}{2}}}$, and performing integration by parts with respect to $\xi^{\frac{1}{2}}$ leads to a gain of $\simeq \tau^{-1}\xi^{-\frac{1}{2}}$, which makes the integrand absolutely integrable with respect to $d\xi$. In the second integral one has the implicit phases $e^{i(\pm R\pm \nu\tau)\xi^{\frac{1}{2}}}$, and here we gain $\simeq (\pm R\pm \nu\tau)^{-1}\xi^{-\frac{1}{2}}$. More precisely, for the first integral one writes (see \cite{KST})
\EQ{\label{phiphi0}
\phi(R, \xi) = \phi_0(R) + R^{-1}\sum_{j=1}^\infty(R^2\xi)^j\phi_j(R^2),
}
with $\phi_j(R^2)\lesssim R^2$, and if this expression is hit by a $\partial_{\xi^{\frac{1}{2}}}$, it changes to 
\[
2R\xi^{\frac{1}{2}}\sum_{j=1}^\infty j(R^2\xi)^{j-1}\phi_j(R^2),
\]
and we have $R^2\lesssim R\xi^{-\frac{1}{2}}$ on the support of the integrand. The extra factor $R$ is used to absorb the $R^{-1}$ in 
\[
\big\|\frac{1}{R}\chi_{[R<\frac{\nu\tau}{2}]}\mathcal{F}^{-1}(x^{(2)})\big\|_{L^\infty_{dR}}\lesssim \tau^{-N_1+}
\]
The case when the operator $\partial_{\xi^{\frac{1}{2}}}$ hits the other terms in the integral are handled similarly. 

We have now reduced matters to controlling integrals of the form\footnote{These arise upon replacing $\sin^2(\nu\frac{\tau^{1+\nu^{-1}}}{\sigma^{\nu^{-1}}}\xi^{\frac{1}{2}})$ by $\sin^2(\nu\frac{\tau^{1+\nu^{-1}}}{\sigma^{\nu^{-1}}}\xi^{\frac{1}{2}})-\frac{1}{2}$ and similarly for $\cos^2(\nu\frac{\tau^{1+\nu^{-1}}}{\sigma^{\nu^{-1}}}\xi^{\frac{1}{2}})$.} 
\begin{align*}
& \f{\sin(\nu\tau\xi^{\frac{1}{2}})}{\xi^{\frac{5}{4+}} }\int_{\tau}^\infty \frac{\lambda^{0+}(\tau)}{\lambda^{0+}(\sigma)}C(\frac{\lambda^{2}(\tau)}{\lambda^{2}(\sigma)}\xi)\frac{\rho^{\frac{1}{2}}(\frac{\lambda^{2}(\tau)}{\lambda^{2}(\sigma)}\xi)}{\rho^{\frac{1}{2}}(\xi)}\Xi\big(\kappa\nu\frac{\tau^{1+\nu^{-1}}}{\sigma^{\nu^{-1}}}\xi^{\frac{1}{2}}\big)\frac{(\log\sigma)^{N_2}}{\sigma^{N_1}}\,d\sigma\\
& \f{\cos(\nu\tau\xi^{\frac{1}{2}})}{\xi^{\frac{5}{4+}}}\int_{\tau}^\infty \frac{\lambda^{0+}(\tau)}{\lambda^{0+}(\sigma)}C(\frac{\lambda^{2}(\tau)}{\lambda^{2}(\sigma)}\xi)\frac{\rho^{\frac{1}{2}}(\frac{\lambda^{2}(\tau)}{\lambda^{2}(\sigma)}\xi)}{\rho^{\frac{1}{2}}(\xi)}\Xi\big(\kappa\nu\frac{\tau^{1+\nu^{-1}}}{\sigma^{\nu^{-1}}}\xi^{\frac{1}{2}}\big)\frac{(\log\sigma)^{N_2}}{\sigma^{N_1}}\,d\sigma
\end{align*}
with $\Xi = \sin, \cos$ and $\kappa\in \{1,2,3\}$. 
Here we perform integration by parts with respect to $\sigma$, which gives the desired gain $\xi^{-\frac{1}{2}}$, placing the term into $L^{2,1-}_{d\rho}$.\\ 
To conclude the bounds for the first iterate, we still need to bound the contribution from all the errors which arose when we replaced $\phi(R, \xi)$ by $c\Im(i\xi^{-\frac{1}{2}}e^{iR\xi^{\frac{1}{2}}})$, as well as the errors of type $E_{2k-1}$. Note that the former type of error contributes to the Fourier transform of $R\tilde{e}_{2k-1}$ a term of the  schematic form 
\[
\xi^{-1}\lambda(\sigma)^{-2}\int_0^{\nu\sigma}R\tilde{e}_{2k-1}\, dR
\]
and we can use a crude $L^1_{dR}$-bound for the integrand to infer that this contribution is bounded by $\lesssim \tau^{-N}\xi^{-\frac{3}{2}}$, which places this contribution into $\tau^{-N}L^{2,\frac{3}{4}-}_{d\rho}$. The contribution of errors of type $E_{2k-1}$ was handled in the proof of Corollary~\ref{cor:firstiter}. This completes the proof of the lemma. 
\end{proof}

\section{The second iterate}

We observe that Lemma~\ref{lem:key} implies the following: the first iterate $\tilde{\eps}_1$ can be written as $\tilde{\eps}_1 = \tilde{\eps}^{(1)} + \tilde{\eps}^{(2)}$ where 
\begin{equation}\label{eq:1stiterbounds}
\begin{aligned}
\big\|\tilde{\eps}^{(1)}\big\|_{H^{\frac{3}{2}-}_{dR}} + \big\|\frac{\tilde{\eps}^{(1)}}{R}\big\|_{L_{dR}^M} &\lesssim \tau^{-N},\\
\big\|\tilde{\eps}^{(2)}\big\|_{H^{1+\frac{\nu}{2}-}} + \big\|\chi_{[R<\frac{\nu\tau}{2}]}\frac{\tilde{\eps}^{(2)}}{R}\big\|_{L^\infty_{dR}} &\lesssim \tau^{-N}
\end{aligned}
\end{equation}
where $M\geq 2$ can be chosen  arbitrarily large (with implicit constant depending on $M$). By radiality, we then obtain 
\[
\big\|\frac{\tilde{\eps}^{(2)}}{R}\big\|_{L^\infty_{dR}} + \big\|\frac{\tilde{\eps}^{(2)}}{R}\big\|_{L^M_{dR}} \lesssim \tau^{-N},\,
\]
To control the second iterate $\tilde{\eps}_2$, write 
\begin{align*}
\big(\big(\mathcal{D}_{\tau}^2 + \beta_{\nu}(\tau)\mathcal{D}_{\tau} + \underline{\xi}\big)(\underline{x}_2-\underline{x}_1)\big)(\tau, \xi) = \calR(\tau, \underline{x}_{1}) + \Delta_1f_{1}(\tau, \underline{\xi})
\end{align*}
with 
\begin{align*}
\Delta_1f_{1}(\tau, \underline{\xi}): =  \mathcal{F}\big( \lambda^{-2}(\tau)\big[5(u_{2k-1}^4 - u_0^4)\tilde{\eps}_1 + RN(u_{2k-1}, \tilde{\eps}_1)\big]\big)\big(\xi\big)
\end{align*}
Then we claim 
\begin{lem}\label{lem:eps_2} We have the estimates 
\begin{align*}
&(x_2 - x_1)(\tau, \cdot) \in \tau^{-N}L^{2,1}_{d\rho},
 |(x_2-x_1)_d(\tau)| \lesssim \tau^{-N-1}, \\
&\mathcal{D}_{\tau}(x_2-x_1)(\tau, \cdot) \in \tau^{-N-1}L^{2,\frac{1}{2}}_{d\rho},\, |\partial_{\tau}(x_2-x_1)_d(\tau)| \lesssim \tau^{-N-1}
\end{align*}
hold. 
In fact, one gains a factor $\frac{1}{N}$ in the corresponding norm bounds. 
\end{lem}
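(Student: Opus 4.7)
The plan is to apply Lemma~\ref{lem:basicpara} (with $\alpha=\tfrac12$, $M=N+2$) to
\begin{equation*}
\big(\mathcal{D}_{\tau}^2 + \beta_{\nu}(\tau)\mathcal{D}_{\tau} + \underline{\xi}\big)(\underline{x}_2 - \underline{x}_1) = \calR(\tau,\underline{x}_1) + \Delta_1 f_1(\tau,\underline{\xi}),
\end{equation*}
which reduces the task to showing that both $\calR(\tau,\underline x_1)$ and $\Delta_1 f_1$ lie in $\tau^{-N-2}L^{2,1/2}_{d\rho}$ with a constant that can be made $\lesssim N^{-1}$ by taking the base time $\tau_0=\tau_0(N)$ sufficiently large. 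The parametrix \eqref{eq:para} then yields the $L^{2,1}_{d\rho}$-bound on $x_2-x_1$; the $\mathcal D_\tau$-estimate follows by differentiating \eqref{eq:para} in $\tau$, which costs one half power of $\langle\xi\rangle$ but picks up $\beta_\nu\lesssim\tau^{-1}$; and the discrete-spectrum bounds come from \eqref{eq:x_d} applied to the same right-hand sides.

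For the transference piece $\calR(\tau,\underline x_1)$, the prefactor $\beta_\nu\lesssim\tau^{-1}$ together with the mapping properties of $\mathcal K$, $[\mathcal A,\mathcal K]$ and $\mathcal K^2$ on $L^{2,\alpha}_{d\rho}$ (established in \cite{KST,DoKr}) combined with the Lemma~\ref{lem:key} bounds on $\underline x_1$ and $\mathcal D_\tau\underline x_1$ put it in $\tau^{-N-2}L^{2,1/2}_{d\rho}$ with an extra $\tau^{-1}$ to spare. The linear-in-$\tilde\eps_1$ coefficient term $\lambda^{-2}(u_{2k-1}^4-u_0^4)\tilde\eps_1$ of $\Delta_1 f_1$ is easy because $u_{2k-1}^4-u_0^4\in\lambda^{2}\mu^{-2}\IS^0(\langle R\rangle^{-3}(\log R)^c,\cQ)$ gives an extra gain of $\mu^{-2}\sim\tau^{-2\nu}$, and Plancherel then reduces the Fourier bound to a Sobolev estimate on $\tilde\eps_1\in\tau^{-N}H^{1+\nu/2-}_{dR}$. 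The intermediate nonlinear monomials $u_{2k-1}^{5-j}\tilde\eps_1^{j}/R^{j-1}$ with $2\le j\le 4$ are handled in the same spirit, exploiting the pointwise decay $u_{2k-1}=O(\lambda^{1/2}\langle R\rangle^{-1})$ together with the norms of $\tilde\eps_1$ from Lemma~\ref{lem:key}.

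The main obstacle is the purely quintic contribution $\lambda^{-2}\tilde\eps_1^5/R^4$, precisely because the naive embedding $(H^{1+\nu/2-})^5\subset H^{\nu/2-}$ fails for $\nu<\tfrac12$; this is exactly what forced the restriction $\nu>\tfrac12$ in \cite{KST}. To overcome this I would split $\tilde\eps_1=\tilde\eps^{(1)}+\tilde\eps^{(2)}$ via Lemma~\ref{lem:key} and expand the fifth power. Any monomial containing at least one factor of $\tilde\eps^{(2)}$ is controlled using $\|\chi_{[R<\nu\tau/2]}\tilde\eps^{(2)}/R\|_{L^\infty_{dR}}\lesssim\tau^{-N}$, which absorbs one copy of $R^{-1}$ without losing regularity and gains a full $\tau^{-N}$; the complementary region $R\ge\nu\tau/2$ is harmless because after the Huyghens-principle truncation of $\tilde e_{2k-1}$ only the smooth remainder $\tilde\eps^{(1)}$ can reach it. The remaining monomial $(\tilde\eps^{(1)})^5/R^4$ is handled by H\"older: place one copy of $\tilde\eps^{(1)}$ in $H^{3/2-}_{dR}$, the four copies of $\tilde\eps^{(1)}/R$ in $L^M_{dR}$ for a very large $M$, and use Sobolev embedding to recover the required $H^1_{R^2 dR}\cong L^{2,1/2}_{d\rho}$ regularity. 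Iterating through the four powers of $R^{-1}$ and summing the $2^5$ contributions closes the estimate with a surplus $\tau^{-c}$, which delivers the claimed $1/N$ contraction gain; the discrete estimates for $(x_2-x_1)_d$ and $\partial_\tau(x_2-x_1)_d$ are then immediate consequences of \eqref{eq:x_d} applied to these same right-hand side bounds.
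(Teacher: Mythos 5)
Your proposal follows the same overall strategy as the paper: reduce to the source bound via Lemma~\ref{lem:basicpara}, dispatch $\calR(\tau,\underline{x}_1)$ by the smoothing property of $\mathcal K$ together with $\beta_\nu\lesssim\tau^{-1}$, and control $\Delta_1 f_1$ using the decomposition $\tilde\eps_1=\tilde\eps^{(1)}+\tilde\eps^{(2)}$ from Lemma~\ref{lem:key}. The paper's actual treatment of the quintic term is a bit more economical (it works with $\|\tilde\eps_1/R\|_{L^M}$ and $\|\tilde\eps_1\|_{H^{1+}}$ directly, rather than expanding the fifth power into $2^5$ monomials), but your bookkeeping would yield the same result.

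There is, however, one step whose justification is wrong. When a monomial contains a factor $\tilde\eps^{(2)}$, you invoke $\|\chi_{[R<\nu\tau/2]}\tilde\eps^{(2)}/R\|_{L^\infty_{dR}}\lesssim\tau^{-N}$ and then assert that the complementary region $R\ge\nu\tau/2$ is harmless ``because only the smooth remainder $\tilde\eps^{(1)}$ can reach it.'' That is false: the cutoff in Lemma~\ref{lem:key} restricts the \emph{estimate}, not the support of $\tilde\eps^{(2)}=\mathcal F^{-1}(x^{(2)})$, which is certainly not localized to $R<\nu\tau/2$ (distorted Fourier inversion is nonlocal, and the parametrix has no exact Huyghens principle). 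The correct and simple fix, which is exactly what the paper does, is to upgrade the cutoff bound to a global one by radiality: since $\tilde\eps^{(2)}\in H^{1+\nu/2-}_{dR}$ with norm $\lesssim\tau^{-N}$, radial Sobolev embedding gives $|\tilde\eps^{(2)}(R)|\lesssim R^{-1/2}\tau^{-N}$ for $R\gtrsim 1$, hence $|\tilde\eps^{(2)}(R)/R|\lesssim R^{-3/2}\tau^{-N}$ on $R\ge\nu\tau/2$, yielding the unrestricted bounds $\|\tilde\eps^{(2)}/R\|_{L^\infty_{dR}}+\|\tilde\eps^{(2)}/R\|_{L^M_{dR}}\lesssim\tau^{-N}$. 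With that replacement your argument goes through.
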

\begin{proof}[Proof of Lemma~\ref{lem:eps_2}] From Lemma~\ref{lem:basicpara}, we conclude that, with $x, f$ as in \eqref{eq:para}, we have 
\[
\sup_{\tau>0}\tau^{N}\big\|x(\tau, \cdot)\big\|_{L^{2,\alpha+\frac{1}{2}}} \lesssim \sup_{\tau>0}\tau^{N+2}\big\|f(\tau, \cdot)\big\|_{L^{2,\alpha}}
\]
\[
\sup_{\tau>0}\tau^{N}\big\|\mathcal{D}_{\tau}x(\tau, \cdot)\big\|_{L^{2,\alpha}} \lesssim \sup_{\tau>0}\tau^{N+1}\big\|f(\tau, \cdot)\big\|_{L^{2,\alpha}}
\]
Further, from \cite{KST}, \cite{DoKr} we have the operator bounds (here $\alpha$ is arbitrary)
\begin{align*}
\mathcal{K}_{cc}: L^{2,\alpha}_{d\rho}\rightarrow L^{2,\alpha+\frac{1}{2}}_{d\rho},\quad [\mathcal{K}_{cc}, \mathcal{A}]: L^{2,\alpha}_{d\rho}\rightarrow L^{2,\alpha+\frac{1}{2}}_{d\rho} \\ 
\mathcal{K}_{dc}: L^{2,\alpha}_{d\rho}\rightarrow \R,\quad \mathcal{K}_{dc}: \R\rightarrow  L^{2,\alpha}_{d\rho},\quad \mathcal{K}_{dd}: \R\rightarrow \R
\end{align*}
In fact, the operator $\mathcal{K}_{cc}$ is a smoothing operator, and corresponds to the operator $\mathcal{K}_0$ in \cite{KST} . Our strategy is to exploit the smoothing effect of the wave parametrix \eqref{eq:para}. This indeed leads to a derivative gain for all the  terms in $\calR(\tau, \underline{x}_{1})$:  assuming the that the Fourier transform $\underline{x}_1$ of the first iterate can be decomposed into two terms with bounds as in Lemma~\ref{lem:key}, we get 
\[
\mathcal{R}(\tau, \underline{x}_1)\in \tau^{-N-2}L^{2,\frac{1}{2}}_{d\rho} 
\]
Application of \eqref{eq:para} leads to expressions in $\tau^{-N}L^{2, 1}_{d\rho}$ ; moreover, applying $\mathcal{D}_{\tau}$ to these terms leads to expressions in $\tau^{-N-1}L^{2,\frac{1}{2}}_{d\rho}$, as required. The contribution to the discrete spectral part is also immediate.  
\\
We next turn to the contribution of $\Delta_1f_{1}(\tau, \underline{\xi})$. In fact, we claim that all these terms lead to a contribution in $L^{2,1}_{d\rho}$ upon application of \eqref{eq:para}. 
To see this, it suffices to check that 
\begin{align*}
\mathcal{F}\big( \lambda^{-2}(\tau)\big[5(u_{2k-1}^4 - u_0^4)\tilde{\eps}_1 + RN(u_{2k-1}, \tilde{\eps}_1)\big]\big)\big(\xi\big)\in \tau^{-N-2}L^{2,\frac{1}{2}}_{d\rho}
\end{align*}
For the first term, use the relation from \cite{KST} that 
\[
u_{2k-1} - u_0 = O(\frac{\lambda^{\frac{1}{2}}R^{1+}}{(\lambda t)^2})
\]
from which we easily infer 
\[
\big\|\mathcal{F}\big( \lambda^{-2}(\tau)\big[5(u_{2k-1}^4 - u_0^4)\tilde{\eps}_1 ]) \big\|_{L^{2,\frac{1}{2}}_{d\rho}}\lesssim (\lambda t)^{-2}\|\tilde{\eps}_1\|_{H^1_{dR}}\simeq \tau^{-2}\|\tilde{\eps}_1\|_{H^1_{dR}}
\]
which is bounded from Lemma~\ref{lem:key}. To control the source term $RN(u_{2k-1}, \tilde{\eps}_1)$, we consider the two extreme possibilities 
\[
u_{2k-1}^3\frac{\tilde{\eps}_1^2}{R},\;\;(\frac{\tilde{\eps}_1}{R})^4\tilde{\eps}_1.
\]
To bound the first term on the left, we use 
\begin{align*}
\big\|\lambda^{-2}(\tau)u_{2k-1}^3\frac{\tilde{\eps}_1^2}{R}\big\|_{H^1_{dR}}&\lesssim \|\frac{\tilde{\eps}_1}{R}\|_{L_{dR}^M}^2\|\lambda^{-2}(\tau)u_{2k-1}^3\|_{L^{2+}} \\&+ \|\tilde{\eps}_1\|_{H^{1+}_{dR}}\|\frac{\tilde{\eps}_1}{R}\|_{L_{dR}^M}\|\lambda^{-2}(\tau)u_{2k-1}^3\|_{W^{1,\infty}}
\end{align*}
while for the second term we have 
\begin{align*}
\big\|(\frac{\tilde{\eps}_1}{R})^4\tilde{\eps}_1\big\|_{H^1_{dR}}&\lesssim \|\frac{\tilde{\eps}_1}{R}\|_{L^{10}_{dR}}^5+ \|\frac{\tilde{\eps}_1}{R}\|_{L^M_{dR}}^4\|\tilde{\eps}_1\|_{H^{1+}_{dR}}
\end{align*}
and we have 
\[
 \|\frac{\tilde{\eps}_1}{R}\|_{L^{10}_{dR}}\lesssim  \|\frac{\tilde{\eps}_1}{R}\|_{L^{M}_{dR}} + \|\tilde{\eps}_1\|_{H^{1}_{dR}}
 \]
 Note that the rapid decay rate of $\tilde{\eps}_1$ gives much more than $\tau^{-N-2}$-decay. 

\section{The higher iterates}
Here we repeat the procedure of the preceding section, except that we replace $\underline{x}_2-\underline{x}_1$ by $\underline{x}_k-\underline{x}_{k-1}$, $k\geq 3$, and 
we replace one copy of $\tilde{\eps}_1$ by $\tilde{\eps}_{k-1} - \tilde{\eps}_{k-2}$ in each source term. 
Then we can literally repeat what we did before.  To be specific, we claim the following: 

\begin{lem}\label{lem:eps_k} We have the bounds
\begin{align*}
&(x_k - x_{k-1})(\tau, \cdot) \in \tau^{-N}L^{2,1}_{d\rho},
 |(x_k-x_{k-1})_d(\tau)| \lesssim \tau^{-N-1}, \\
&\mathcal{D}_{\tau}(x_k-x_{k-1})(\tau, \cdot) \in \tau^{-N-1}L^{2,\frac{1}{2}}_{d\rho},\, |\partial_{\tau}(x_k-x_{k-1})_d(\tau)| \lesssim \tau^{-N-1}
\end{align*}
In fact, one gains a factor $(\frac{1}{N})^k$ in the corresponding norm bounds. 
\end{lem}
 The proof is by induction on $k$, and is in all respects identical to the one of the preceding lemma, except that there is no one factor $\tilde{\eps}_{k-1} -\tilde{\eps}_{k-2}$ involved in the analogue of $\Delta_1f_{1}(\tau, \underline{\xi})$. The factor $(\frac{1}{N})^k$ comes from the repeated time integrations. 
\end{proof}

The fixed point of \eqref{eq:transport} is now found by  iteration, which completes the proof of Proposition~\ref{prop:key}. 
\end{proof}

\bigskip

\centerline{\scshape Joachim Krieger }
\medskip
{\footnotesize
 \centerline{B\^{a}timent des Math\'ematiques, EPFL}
\centerline{Station 8, 
CH-1015 Lausanne, 
  Switzerland}
  \centerline{\email{joachim.krieger@epfl.ch}}
} 

\medskip

\centerline{\scshape Wilhelm Schlag}
\medskip
{\footnotesize
 \centerline{Department of Mathematics, The University of Chicago}
\centerline{5734 South University Avenue, Chicago, IL 60615, U.S.A.}
\centerline{\email{schlag@math.uchicago.edu}
}
} 

\bigskip

\end{document}